\numberwithin{equation}{section} 
\newtheorem{assumption}{Assumption}
\begin{document}

 \PageNum{1}
 \Volume{201x}{Sep.}{x}{x}
 \OnlineTime{August 15, 202x}
 \DOI{0000000000000000}
 \EditorNote{Received x x, 202x, accepted x x, 201x}

\abovedisplayskip 6pt plus 2pt minus 2pt \belowdisplayskip 6pt
plus 2pt minus 2pt
\def\vsp{\vspace{1mm}}
\def\th#1{\vspace{1mm}\noindent{\bf #1}\quad}
\def\proof{\vspace{1mm}\noindent{\it Proof}\quad}
\def\no{\nonumber}
\newenvironment{prof}[1][Proof]{\noindent\textit{#1}\quad }
{\hfill $\Box$\vspace{0.7mm}}
\def\q{\quad} \def\qq{\qquad}
\allowdisplaybreaks[4]


\AuthorMark{Liu M. et al.}                             

\TitleMark{Trotter-Kato Approximations of Impulsive Neutral SPDEs}  

\title{Trotter-Kato Approximations of Impulsive Neutral SPDEs in Hilbert Spaces        
\footnote{Supported by the National Natural Science Foundation of China (Grant No. 12171361) and the Humanity and Social Science Youth foundation of Ministry of Education (Grant No. 20YJC790174).}}                  

\author{Ming \uppercase{Liu}}             
    {Address\,$:$School of Mathematical Sciences, TianGong University, No. 399 Binshui Road, Xiqing District, 300387 Tianjin, P.R.China\\
    E-mail\,$:$liuming@tiangong.edu.cn }

\author{Ling Fei \uppercase{Dai}}     
    {Address\,$:$School of Mathematical Sciences, TianGong University, No. 399 Binshui Road, Xiqing District, 300387 Tianjin, P.R.China\\
    E-mail\,$:$dlf970226@163.com  }

\author{Xia \uppercase{Zhang}\textsuperscript{1)}\thanks{1) Corresponding author}}     
    {Address\,$:$School of Mathematical Sciences, TianGong University, No. 399 Binshui Road, Xiqing District, 300387 Tianjin, P.R.China\\
    E-mail\,$:$zhangxia@tiangong.edu.cn  }

\maketitle%

\Abstract{This paper studies a class of impulsive neutral stochastic partial differential equations in real Hilbert spaces.
The main goal here is to consider the Trotter-Kato approximations of mild solutions of such equations in the $p$th-mean ($p\geq2$).
As an application, a classical limit theorem on the dependence of such equations on a parameter is obtained.
The novelty of this paper is that the combination of this approximating system and such equations has not been considered before.}      

\Keywords{Impulsive neutral stochastic partial differential equation, Trotter-Kato approximations, Classical limit theorem}        

\MRSubClass{}      

\section{Introduction}
The study on mild solutions of impulsive stochastic partial differential equations (SPDEs), which is a hot research theme
in the past two decades, had attracted a great interests of a lot of famous scholars, such as Ahmed\cite{91}, Anguraj and Vinodkumar\cite{AA09}, Da Prato and Zabczyk\cite{92a}, Govindan\cite{09,14}, Ichikawa\cite{82}, Mckibben\cite{11} and so on.
Many of these equations are often used to simulate the stochastic process that are found in the study of natural science, engineering and mathematical finance.

The paper aims to study the impulsive neutral SPDE in a real separable Hilbert space of the form:
\begin{align}
\label{system(1.1)}
 d[x(t)+f(t,\pi_tx)]&=[Ax(t)+a(t,\pi_tx)]dt+b(t,\pi_tx)d\omega(t),\ t>0,\ t\neq t_k,\\
 \label{system(1.2}
 \Delta x(t_k)&=x(t_k^+)-x(t_k^-)=I_k(x(t_k)),\ t=t_k,\ k=1,2,\dots,m,\\
\label{system(1.3)}
x(t)&=\varphi(t),\ t\in[-r,0]\ (0\leq r<\infty),
\end{align}
where $\pi_tx=\{x(t-r+s):0\leq s\leq r\}$, $A:\ D(A)\subseteq X\rightarrow X$ is the infinitesimal generator of a strongly continuous semigroup $\{S(t), t\geq0\}$ defined on $X$, $a:\ R^+\times X\rightarrow X(R^+=[0,\infty))$, $f:\ R^+\times X\rightarrow D(A^\alpha)$, $0<\alpha\leq1$ and $b:\ R^+\times X\rightarrow L(Y,X)$ are Borel-measurable. Here $\omega(t)$ is a $Y$-valued $Q$-Wiener process. The fixed moments of time $t_k$ satisfy $0<t_1<\dots<t_m<T,$ where $x(t_k^+)$ and $x(t_k^-)$ represent the right and left limits of $x(t)$ at $t=t_k,$ respectively. $\Delta x(t_k)=x(t_k^+)-x(t_k^-),$ represents the jump in the state $x$ at time $t_k$ with $I_k\in C(X,X)$ determining the size of the jump. The past stochastic process $\{\varphi(t), t\in[-r,0]\}$ has almost surely (a.s.) continuous sample paths with $E\|\varphi\|_c^p<\infty$, $p\geq2$.

Taniguchi\cite{98} investigated the existence and uniqueness of a mild solution of the stochastic evolution equations with finite delays.
Next, Govindan\cite{09} established the properties of mild solutions for SPDEs with neutral term.
Anguraj and Vinodkumar\cite{AA09} studied the existence, uniqueness and stability results of other form of impulsive stochastic semilinear neutral functional differential equations with infinite delays.
Chaudhary and Pandey\cite{19} considered the existence of  mild solutions of the impulsive neutral fractional stochastic integro-differential systems with state dependent delay. In addition, Deng, Shu and Mao\cite{18a} established a new impulsive-integral inequality to prove the exponential stability of mild solutions for a class of impulsive neutral SPDEs driven by fBm with noncompact semigroup. Baleanu, Annamalai, Kandasamy, et. al\cite{18b} studied the impulsive neutral SPDEs with Poisson jumps. Guo, Chen, Shu et. al\cite{20a} considered the Hyers-Ulam stability of the almost periodic solution to the fractional differential equation with impulse and fractional Brownian motion under nonlocal condition.
The classical Trotter-Kato approximating system had been well studied in Pazy\cite{83}.
Such approximations also had been considered in Govindan\cite{GTE20} and Kannan and Bharucha-Reid\cite{85} for the semilinear stochastic evolution equations.
Besides, one can refer to the work of Guo et. al\cite{GTX10,GTXYZY98,YZYGTX94}.
To the best of our knowledge, Trotter-Kato approximating system into impulsive neutral SPDEs has not been considered in the relevant literature.
Therefore, the purpose of this paper is to introduce Trotter-Kato approximations of equation (\ref{system(1.1)}) and prove the convergence of mild solutions of this approximating system in the $p$th-mean ($p\geq2$).
Finally, we give a classical limit theorem on the dependence of the equation (\ref{system(1.1)}) on a parameter as an application.

The remainder of this paper is organized as follows: in Section \ref{sec2}, we give some preliminaries; in Section \ref{sec3}, we consider the Trotter-Kato approximations and zeroth-order approximations results; in the last section, the classical limit theorem is given as an application.

\section{Preliminaries}\label{sec2}

Let $X$ and $Y$ be real separable Hilbert spaces and $L(Y,X)$ the space of bounded linear operators from $Y$ to $X$.
We use $|\cdot|$ to represent the norms of $X,$ $Y$ and $L(Y,X)$.
We write $L(X)$ for $L(X,X)$. Let $(\Omega,\mathcal{F},P,\{\mathcal{F}_t\}_{t\geq0})$ be a complete probability space with an increasing right continuous family $\left\{\mathcal{F}_t\right\}_{t\geq0}$ of complete sub-$\sigma$-algebras of $\mathcal{F}.$
Let $\beta_n(t), n=1,2,\dots,$ be a sequence of real-valued standard Brownian motions mutually independent defined on this probability space.
Let $\omega(t)=\sum_{n=1}^{\infty}\sqrt{\lambda_n}\beta_n(t)e_n,\ t\geq0,$ where $\lambda_n\geq0,\ n=1,2,\dots,$ are nonnegative real numbers and $\{e_n\},\ n=1,2,\dots,$ is a complete orthonormal basis in $Y.$
Let $Q\in L(Y)$ be an operator defined by $Qe_n=\lambda_ne_n.$
The underlying $Y$-valued stochastic process $\omega(t)$ is called a $Q$-Wiener process.
Let $h(t)$ be an $L(Y,X)$-valued function and $\lambda$ be a sequence $\{\sqrt{\lambda_1},\sqrt{\lambda_2},\dots\}.$ Then $|h(t)|_{\lambda}=\left\{\sum_{n=1}^{\infty}|\sqrt{\lambda_n}h(t)e_n|^2\right\}^{\frac{1}{2}}.$
If $|h(t)|_{\lambda}^2<\infty,$ then $h(t)$ is called $\lambda$-Hilbert-Schmidt operator.
Let $C:=C([-r,0];X)$ denote the space of continuous functions $\varphi:[-r,0]\rightarrow X$ endowed with the norm $\|\varphi\|_c=\sup_{-r\leq s\leq0}|\varphi(s)|.$
Let $C([-r,T],L^p(\Omega,X))$ be the space of continuous maps from $[-r,T]$ to $L^p(\Omega,X)$ satisfying the condition $\|x\|_T=\sup_{-r\leq s\leq T}|x(s)|,$ where $0<T<\infty$ and $\|\cdot\|_T$ denotes the norm of $C([-r,T],L^p(\Omega,X))$. For a continuous $\mathcal{F}_t$-adapted measurable $X$-valued stochastic process $x(t): \Omega\rightarrow X$, $t\geq-r$, we have the continuous $\mathcal{F}_t$-adapted measurable $X$-valued stochastic process $\pi_tx: \Omega\rightarrow X$, $t>0$, by setting $\pi_tx=x(t-r+s), 0\leq s\leq r$.

If $\{S(t), t\geq0\}$ is an analytic semigroup with infinitesimal generator A such that $0\in\rho(A)$ (the resolvent set of $A$), then we can define the fractional power $A^{\alpha}$ as a closed linear operator on its domain $D(A^{\alpha})$ for $0<\alpha\leq1.$ Furthermore, the subspace $D(A^{\alpha})$ is dense in $X$ and the expression $\|x\|_{\alpha}=|A^{\alpha}x|,\ x\in D(A^{\alpha}),$ which defines a norm on $D(A^{\alpha}).$
\begin{definition}
\rm(\cite{95})
\em Let $\Phi: [0,\infty)\rightarrow\sigma(\lambda)(Y,X)$ be a $\mathcal{F}_t$-adapted process. Then for any $\Phi$ satisfying $\int_0^tE|\Phi(s)|_\lambda^2ds<\infty$, we define the $X$-valued stochastic integral $\int_0^t\Phi(s)d\omega(s)\in X$ with respect to $\omega(t)$ by
$$\bigg(\int_0^t\Phi(s)d\omega(s),h\bigg)=\int_0^t\langle\Phi^*(s)h,d\omega(s)\rangle,\quad h\in X,$$
where $\Phi^*$ is the adjoint operator of $\Phi.$
\end{definition}
\begin{definition}
\label{def2.2}
\rm(\cite{95})
\em A stochastic process $\{x(t),t\in [-r,T]\}(0<T<\infty)$ is called a mild solution of Eq.(\ref{system(1.1)}) if
\begin{enumerate}[(a)]
\item$x(t)$ is $\mathcal{F}_t$-adapted with $\int_0^T|x(t)|^2dt<\infty,$ a.s.,
\item$x(t)$ satisfies the integral equation
\end{enumerate}
\begin{align*}
x(t)&=S(t)[\varphi(0)+f(0,\varphi)]-f(t,\pi_tx)-\int_0^tAS(t-s)f(s,\pi_sx)ds+\int_0^tS(t-s)a(s,\pi_sx)ds\\
&\quad+\int_0^tS(t-s)b(s,\pi_sx)d\omega(s)+\sum_{0<t_k<t}S(t-t_k)I_k(x(t_k)),\quad a.s.,\ t\in [0,T].
\end{align*}
\end{definition}
\begin{proposition}
\label{pro2.1}
\rm(\cite{92a})
\em Let $W_A^\Phi(t)=\int_0^tS(t-s)\Phi(s)dw(s),\ \forall t\in [0,T]$. Then there exists a constant $c(p,T)>0$ for any $p>2$ and $T\geq0$ such that the stochastic convolution $W_A^\Phi$ has a proper modification that
    $$E\sup_{0\leq t\leq T}|W_A^\Phi(t)|^p\leq c(p,T)\sup_{0\leq t\leq T}\|S(t)\|^pE\int_0^t|\Phi(s)|_\lambda^pds.$$
Moreover, if $E\int_0^T|\Phi(s)|_\lambda^pds<\infty$, then there exists a continuous version of the process $\{W_A^\Phi,t\geq0\}$.
\end{proposition}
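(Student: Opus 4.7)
The plan is to establish the maximal inequality via the factorization method of Da Prato--Kwapie\'n--Zabczyk, which converts the stochastic convolution into a deterministic Bochner integral of an auxiliary stochastic process and thereby reduces the problem to classical inequalities. Fix $\alpha \in (1/p, 1/2)$, a nonempty interval since $p > 2$. Starting from the identity $\int_s^t (t-\sigma)^{\alpha-1}(\sigma-s)^{-\alpha}d\sigma = \pi/\sin(\pi\alpha)$ for $0\leq s<t$, and invoking the stochastic Fubini theorem (legitimate under the integrability hypothesis on $\Phi$), one obtains the representation
\[
W_A^\Phi(t) \;=\; \frac{\sin(\pi\alpha)}{\pi}\int_0^t (t-\sigma)^{\alpha-1}\,S(t-\sigma)\,Y(\sigma)\,d\sigma,
\]
where $Y(\sigma) = \int_0^\sigma (\sigma-s)^{-\alpha}S(\sigma-s)\Phi(s)\,d\omega(s)$.

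The next step is to bound the outer, deterministic convolution by H\"older's inequality in $\sigma$ with conjugate exponents $p$ and $p/(p-1)$. The choice $\alpha > 1/p$ makes the kernel $(t-\sigma)^{(\alpha-1)p/(p-1)}$ integrable on $[0,T]$, and extracting $\sup_{0 \leq t \leq T}\|S(t)\|^p$ from the convolution yields
\[
\sup_{0\leq t\leq T}|W_A^\Phi(t)|^p \;\leq\; C(p,\alpha,T)\,\sup_{0\leq t\leq T}\|S(t)\|^p\int_0^T |Y(\sigma)|^p\,d\sigma.
\]
Taking expectations, interchanging order by classical Fubini, and applying the Burkholder--Davis--Gundy inequality to the stochastic integral $Y(\sigma)$ produces an upper bound of the form $C_p\sup\|S\|^p\,E\bigl(\int_0^\sigma(\sigma-s)^{-2\alpha}|\Phi(s)|_\lambda^2\,ds\bigr)^{p/2}$. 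A further H\"older step in the inner variable (with exponents $p/2$ and $p/(p-2)$) converts this $L^2$-type moment into the desired $L^p$-norm $E\int_0^T |\Phi(s)|_\lambda^p\,ds$, after which collecting all constants into a single $c(p,T)$ completes the proof of the estimate.

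The continuous modification follows from the factorized representation itself: the integrand $(t-\sigma)^{\alpha-1}S(t-\sigma)Y(\sigma)$ is an $X$-valued Bochner integral whose norm is almost surely finite under the assumption $E\int_0^T|\Phi(s)|_\lambda^p\,ds<\infty$, so $t$-continuity is inherited from dominated convergence applied to this deterministic integral representation. The main technical obstacle I anticipate is the simultaneous calibration of $\alpha$: one needs both kernel integrals---the outer $(t-\sigma)^{(\alpha-1)p/(p-1)}$ and the inner $(\sigma-s)^{-2\alpha p/(p-2)}$ arising from the second H\"older step---to be integrable at once, which pins down the admissible range of $\alpha$ and is precisely where the hypothesis $p > 2$ is consumed. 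A secondary care point is the stochastic Fubini application, which demands a joint progressive measurability and integrability check on the two-parameter integrand, but this is routine given the standing hypotheses on $\Phi$.
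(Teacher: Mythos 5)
Your overall route---the Da Prato--Kwapie\'n--Zabczyk factorization---is exactly the method behind the cited result (the paper itself gives no proof, it only cites \cite{92a}), and your outer step is fine: H\"older with exponents $p$ and $p/(p-1)$ needs $(1-\alpha)p/(p-1)<1$, i.e. $\alpha>1/p$. The genuine gap is in your second H\"older step. After Burkholder--Davis--Gundy you face $E\bigl(\int_0^\sigma(\sigma-s)^{-2\alpha}|\Phi(s)|_\lambda^2\,ds\bigr)^{p/2}$, and pairing the kernel against $|\Phi|_\lambda^2$ with exponents $p/(p-2)$ and $p/2$ requires $(\sigma-s)^{-2\alpha p/(p-2)}$ to be integrable, i.e. $\alpha<\tfrac12-\tfrac1p$. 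Together with $\alpha>1/p$ this forces $p>4$: for $2<p\le 4$ the set of admissible $\alpha$ for your scheme is empty, so the proof as written does not cover the stated range $p>2$. You anticipated the calibration tension but resolved it incorrectly; the hypothesis $p>2$ only guarantees $(1/p,1/2)\neq\emptyset$, not the compatibility of your two kernel conditions.

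The standard repair keeps $\alpha\in(1/p,1/2)$ and only ever uses $2\alpha<1$: either integrate $E|Y(\sigma)|^p$ over $\sigma\in[0,T]$ and apply Young's convolution inequality ($L^1\ast L^{p/2}\subset L^{p/2}$) to the kernel $s^{-2\alpha}$ against $|\Phi|_\lambda^2$, or apply Jensen/H\"older with respect to the finite measure $(\sigma-s)^{-2\alpha}ds$ to get $E|Y(\sigma)|^p\le c\,\bigl(\tfrac{T^{1-2\alpha}}{1-2\alpha}\bigr)^{p/2-1}E\int_0^\sigma(\sigma-s)^{-2\alpha}|\Phi(s)|_\lambda^p\,ds$ and then use Fubini in $\sigma$; either variant yields the bound by $E\int_0^T|\Phi(s)|_\lambda^p\,ds$ for every $p>2$. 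Two smaller points: your chain actually produces the constant $\sup_{0\le t\le T}\|S(t)\|^{2p}$ (one semigroup factor in the outer convolution and one inside $Y$), not the $\sup_{0\le t\le T}\|S(t)\|^p$ quoted in the statement---harmless for this paper's applications, where $\|S(t)\|\le Me^{\delta t}$, but it should be acknowledged; and the continuity claim is cleaner if phrased as the standard fact that $f\mapsto\int_0^t(t-\sigma)^{\alpha-1}S(t-\sigma)f(\sigma)\,d\sigma$ maps $L^p(0,T;X)$ into $C([0,T];X)$ when $\alpha>1/p$, applied to $Y\in L^p(0,T;X)$ a.s., rather than an appeal to dominated convergence alone.
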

\begin{proposition}
\label{pro2.2}
\rm(\cite{92a})
\em Suppose that $A$ generates a contraction semigroup. Then the process $W_A^\Phi(\cdot)$ has a continuous modification and there exists a constant $\kappa>0$ such that
    $$E\sup_{0\leq t\leq T}|W_A^\Phi(t)|^2\leq \kappa E\int_0^t|\Phi(s)|_\lambda^2ds.$$
\end{proposition}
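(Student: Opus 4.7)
The plan is to reduce the stochastic convolution to a genuine martingale and then apply Doob's maximal inequality, exploiting the contractivity of the semigroup via a unitary dilation. Since $A$ generates a contraction $C_0$-semigroup $\{S(t)\}_{t\geq 0}$ on $X$, the Sz.-Nagy dilation theorem provides a larger Hilbert space $\widetilde X \supseteq X$, an orthogonal projection $P:\widetilde X\to X$, and a strongly continuous unitary group $\{U(t)\}_{t\in\mathbb R}$ on $\widetilde X$ such that $S(t)x = PU(t)x$ for all $x\in X$ and all $t\geq 0$.

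First I would rewrite the convolution using the group. For $0\le s\le t$, write $U(t-s)=U(t)U(-s)$, so that
\begin{equation*}
W_A^\Phi(t)=\int_0^t S(t-s)\Phi(s)\,d\omega(s) = P\,U(t)\,M(t),\qquad M(t):=\int_0^t U(-s)\Phi(s)\,d\omega(s),
\end{equation*}
where $M(t)$ is a $\widetilde X$-valued $\mathcal{F}_t$-martingale provided the $\widetilde X$-valued stochastic integral is well defined; since $U(-s)$ is an isometry, the integrability hypothesis $E\int_0^T|\Phi(s)|_\lambda^2\,ds<\infty$ transfers directly to $U(-s)\Phi(s)$. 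Because $P$ and $U(t)$ are contractions, $|W_A^\Phi(t)|\leq |M(t)|$ pointwise in $\Omega$.

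Next I would apply Doob's $L^2$ maximal inequality to the $\widetilde X$-valued martingale $M$ (or equivalently to the real submartingale $|M(t)|^2$), and then the It\^o isometry together with the unitarity of $U(-s)$:
\begin{equation*}
E\sup_{0\leq t\leq T}|W_A^\Phi(t)|^2 \leq E\sup_{0\leq t\leq T}|M(t)|^2 \leq 4\,E|M(T)|^2 = 4\,E\int_0^T |U(-s)\Phi(s)|_\lambda^2\,ds = 4\,E\int_0^T|\Phi(s)|_\lambda^2\,ds,
\end{equation*}
which yields the inequality with $\kappa=4$. The continuous modification of $W_A^\Phi$ comes from the continuous modification of the martingale $M$ (standard for Hilbert-valued square-integrable martingales) combined with the strong continuity of $U(t)$ and the boundedness of $P$.

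The main technical obstacle is justifying the dilation step in the stochastic-integral context: one must check that the $\widetilde X$-valued It\^o integral defining $M$ behaves compatibly with the projection $P$, i.e.\ $P\int_0^t U(t)U(-s)\Phi(s)\,d\omega(s) = \int_0^t PU(t-s)\Phi(s)\,d\omega(s)$. A cleaner route that avoids this identification is to first prove the inequality for simple processes $\Phi(s)=\sum_j \mathbf{1}_{(s_{j-1},s_j]}(s)\Phi_j$ with $\Phi_j$ bounded and $\mathcal{F}_{s_{j-1}}$-measurable, where the argument reduces to an elementary telescoping together with Doob's inequality applied to the discrete martingale $\sum_{j\leq k} U(-s_j)\Phi_j\Delta\omega_j$, and then extend by density using the isometry and the continuity of the integral in $L^2(\Omega; C([0,T];X))$.
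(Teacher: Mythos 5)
Your argument is correct and is essentially the classical proof of this result: the paper states Proposition \ref{pro2.2} without proof, citing Da Prato and Zabczyk \cite{92a}, and the proof there is exactly your route --- Sz.-Nagy unitary dilation $S(t)=PU(t)$, factoring the convolution as $P\,U(t)\int_0^t U(-s)\Phi(s)\,d\omega(s)$, then Doob's $L^2$ maximal inequality and the It\^o isometry, giving the constant $\kappa=4$. Your closing remark about commuting the bounded projection $P$ with the stochastic integral is standard and not a real obstacle, so the simple-process detour, while harmless, is not needed.
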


\section{Trotter-Kato approximations}\label{sec3}

In this section, we first inject the idea of Trotter-Kato approximations into equation (\ref{system(1.1)}) to obtain equation (\ref{system(3.1)}), in which the mild solution of this equation also belongs to $C([0,T],L^p(\Omega,X)).$ Next, we prove the mild solution of equation (\ref{system(3.1)}) converging in the pth-mean $(p\geq2)$ to the mild solution of equation (\ref{system(1.1)}) in Theorem \ref{the(3.2)} according to the properties of the solution space and the function terms in the equations.
We impose the following assumption to study the main results.
\begin{assumption}
\label{A1}
\ \
\begin{enumerate}[(i)]
\item$A$ is the infinitesimal generator of a analytic semigroup of bounded linear operators $\{S(t),t\geq0\}$
in $X.$
\item For $p \geq 2$, the functions $a(t,\cdot)$ and $b(t,\cdot)$ satisfy the Lipschitz and linear growth conditions:
$$|a(t,\pi_tx)-a(t,\pi_ty)|^p \leq C_{1}|x-y|^p,\ C_1>0,$$
$$|b(t,\pi_tx)-b(t,\pi_ty)|_\lambda^p \leq C_2|x-y|^p,\ C_2>0,$$
$$|a(t,\pi_tx)|^p+|b(t,\pi_ty)|_\lambda^p\leq C_3(1+|x|^p),\ C_3>0$$
for any $x,\ y\in X.$
\item$f(t,\cdot)$ is a continuous function and satisfies:
$$|A^\alpha f(t,\pi_tx)-A^\alpha f(t,\pi_ty)|\leq C_{4}|x-y|,\ C_4>0,$$
$$|A^\alpha f(t,\pi_tx)|\leq C_{5}(1+|x|),\ C_5>0$$
for any $x,\ y\in X$.
\item$I_k$ is a continuous function and satisfies:
$$|I_k(x(t_k))-I_k(y(t_k))|\leq h_k|x-y|,\ h_k>0,$$
$$I_k(0)\leq h_0$$
for any $x,\ y\in X$.
\end{enumerate}
\end{assumption}
\begin{lemma}
\label{lem(3.1)}
\em Let Assumption (\ref{A1}) hold. Suppose that the semigroup $\{S(t),t\geq0\}$ is a contraction semigroup for the case $p=2$. Then there exists a unique mild solution $x$ in $C([0,T],L^p(\Omega,X))$ of system (\ref{system(1.1)}) provided $L\|A^{-\alpha}\|+Me^{\delta T}\sum_{k=1}^mh_k<1$ for any $p\geq2,$ where $L=\max\{C_4,C_5\}$ and $1/p<\alpha<1.$
\end{lemma}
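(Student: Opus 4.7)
The plan is a Banach fixed-point argument on the space $\mathcal{B}_T:=\{x\in C([-r,T],L^p(\Omega,X)):x|_{[-r,0]}=\varphi\}$ with the natural sup-norm. Define the Picard operator $\Psi$ by the right-hand side of the mild-solution formula in Definition \ref{def2.2}, so that a fixed point of $\Psi$ is precisely a mild solution. The six terms of $\Psi x$ split naturally into: (a) the inhomogeneity $S(t)[\varphi(0)+f(0,\varphi)]$; (b) the pointwise $f$-term; (c) the singular convolution $\int_0^tAS(t-s)f(s,\pi_sx)\,ds$; (d) the Bochner convolution with $a$; (e) the stochastic convolution with $b$; and (f) the impulsive sum.

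First I would verify that $\Psi$ maps $\mathcal{B}_T$ into itself. Terms (a), (b), (d), (f) are handled using $\|S(t)\|\le Me^{\delta T}$, the growth bound $|f|\le\|A^{-\alpha}\|L(1+|x|)$ from (A1)(iii), the linear growth of $a$, and the Lipschitz/growth bound on $I_k$. For (e) I would invoke Proposition \ref{pro2.1} when $p>2$, and Proposition \ref{pro2.2} when $p=2$ (this is precisely where the contraction-semigroup hypothesis enters). For (c), I would factor $AS(t-s)=A^{1-\alpha}S(t-s)\cdot A^{\alpha}$, use the analyticity estimate $\|A^{1-\alpha}S(t-s)\|\le C_{1-\alpha}(t-s)^{-(1-\alpha)}$ and Hölder's inequality with conjugate exponent $p/(p-1)$; the integrability of $(t-s)^{-(1-\alpha)p/(p-1)}$ on $[0,t]$ is exactly the content of the hypothesis $\alpha>1/p$.

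For the contraction step, the decisive observation is the pointwise a.s.\ bound
\[
|f(t,\pi_tx)-f(t,\pi_ty)|+\Bigl|\sum_{0<t_k<t}\!S(t-t_k)[I_k(x(t_k))-I_k(y(t_k))]\Bigr|\le\Bigl(L\|A^{-\alpha}\|+Me^{\delta T}\sum_{k=1}^m h_k\Bigr)\sup_{s\le t}|x(s)-y(s)|,
\]
whose coefficient is $<1$ by hypothesis. The three remaining integral differences (arising from (c), (d), (e)) are each controlled by $\int_0^t E\sup_{u\le s}|x(u)-y(u)|^p\,ds$ via the estimates of the previous step. Splitting through the sharpened Young inequality $(a+b)^p\le\theta^{1-p}a^p+(1-\theta)^{1-p}b^p$ with $\theta\in(0,1)$ chosen close enough to $1$, I obtain
\[
E\sup_{s\le t}|\Psi x(s)-\Psi y(s)|^p\le\mu\,E\sup_{s\le t}|x(s)-y(s)|^p+K\int_0^t E\sup_{u\le s}|x(u)-y(u)|^p\,ds,
\]
where $\mu:=\theta^{1-p}(L\|A^{-\alpha}\|+Me^{\delta T}\sum h_k)^p<1$. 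Iterating this estimate $n$ times produces coefficients of order $\sum_{j}\binom{n}{j}\mu^{n-j}(KT)^j/j!$, which tends to $0$; thus some iterate $\Psi^n$ is a strict contraction on $\mathcal{B}_T$ and Banach's theorem yields the unique mild solution.

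The main obstacle is the singular convolution (c): one must trade the $A$-induced singularity $(t-s)^{-(1-\alpha)}$ against the $L^p$-integration exponent, and the condition $\alpha>1/p$ is dictated by precisely this Hölder estimate. A secondary subtlety is that the Lipschitz constant $Me^{\delta T}\sum h_k$ of the impulsive sum does not decay in $t$, so a simple weighted-norm argument does not deliver contractivity; this forces the grouping of the $f$-term with the impulsive sum in the pointwise estimate above, which is exactly the form required by the hypothesis $L\|A^{-\alpha}\|+Me^{\delta T}\sum_{k=1}^m h_k<1$.
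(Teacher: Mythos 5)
Your argument is correct, and it is genuinely more self-contained than what the paper offers: the paper's ``proof'' of Lemma \ref{lem(3.1)} is a one-line citation to Anguraj--Vinodkumar and Govindan, invoking Picard iterations together with the Borel--Cantelli lemma, whereas you run a fixed-point argument in which some iterate $\Psi^n$ is shown to be a strict contraction. The two routes are the same circle of ideas (successive approximations), but they differ in the convergence mechanism: the cited route estimates the differences of consecutive Picard iterates and uses Borel--Cantelli to get a.s.\ (and $p$th-mean) convergence, while your route packages the non-integral Lipschitz contributions --- the neutral term with constant $C_4\|A^{-\alpha}\|\le L\|A^{-\alpha}\|$ and the impulsive sum with constant $Me^{\delta T}\sum_k h_k$ --- into a coefficient $<1$, absorbs the three convolution differences into a Gronwall-type integral term, and then uses the fact that $\sum_{j}\binom{n}{j}\mu^{n-j}(KT)^j/j!\to 0$ for $\mu<1$ (true, since this sum is $O(\mu^n e^{c\sqrt{n}})$, though you assert it without proof). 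Your treatment of the singular convolution via $AS(t-s)=A^{1-\alpha}S(t-s)A^{\alpha}$, the bound $\|A^{1-\alpha}S(t-s)\|\lesssim (t-s)^{-(1-\alpha)}$ and H\"older with exponent $p/(p-1)$ is exactly where the hypothesis $\alpha>1/p$ enters, and is cleaner than the crude $\|A_nS_n(t-s)\|\le M'e^{\delta T}/t$ bounds the paper uses later; likewise you correctly isolate where the contraction-semigroup assumption is needed (Proposition \ref{pro2.2} for $p=2$). Two cosmetic points: your metric $E\sup_{s\le t}|x(s)-y(s)|^p$ is the $L^p(\Omega;\,\sup)$ norm rather than the $\sup_t E|\cdot|^p$ norm of $C([0,T],L^p(\Omega,X))$ claimed in the statement (either works, but the identification should be stated), and because of the jumps at $t_k$ the fixed-point space should really consist of piecewise-continuous (c\`adl\`ag) processes --- a detail the paper itself glosses over.
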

\begin{proof}
By Anguraj and Vinodkumar \cite{AA09} and Govindan\cite{09}, this lemma can be proved according to Picard's iterations and Borel-Cantelli Lemma.
\end{proof}
Next, we study the family of stochastic neutral partial differential equations
\begin{align}
\label{system(3.1)}
d[x_n(t)+f(t,\pi_tx_n)]&=[A_nx_n(t)+a(t,\pi_tx_n)]dt+b(t,\pi_tx_n)d\omega(t),\ t\in[0,T],\ t\neq t_k,\\
\label{system(3.2)}
 \Delta x_n(t_k)&=x_n(t_k^+)-x_n(t_k^-)=I_k(x_n(t_k)),\ t=t_k,\ k=1,2,\dots,m,\\
\label{system(3.3)}
x_n(t)&=\varphi(t),\ t\in[-r,0],
\end{align}
where $A_n, n=1,2,3,\dots,$ are the infinitesimal generators of strongly continuous analytical semigroups $\{S_n(t),t\geq0\}$ of bounded linear operators on $X,$ respectively.

For each $n=1,2,3,\dots,$ by Lemma 3.1, System (\ref{system(3.1)})-(\ref{system(3.3)}) has a unique mild solution $x_n\in C([0,T],L^p(\Omega,X)).$ Hence, $x_n(t)$ satisfies the stochastic integral equation
\begin{align*}
x_n(t)&=S_n(t)[\varphi(0)+f(0,\varphi)]-f(t,\pi_tx_n)-\int_0^tA_nS_n(t-s)f(s,\pi_sx_n)ds\\
&\quad+\int_0^tS_n(t-s)a(s,\pi_sx_n)ds+\int_0^tS_n(t-s)b(s,\pi_sx_n)d\omega(s)\\
&\quad+\sum_{0<t_k<t}S_n(t-t_k)I_k(x_n(t_k)),\quad a.s.,\ t\in [0,T].
\end{align*}

\begin{lemma}
\label{lem3.2}
Let the operators $A$ and $A_n,n=1,2,3,\dots,$ are densely defined, closed and uniformly sectorial on $X,$ then the operators $A$ and $A_n$ generate strongly continuous analytic semigroups $S(t)$ and $S_n(t)$, respectively, which satisfy the uniform bounds
$$\|S(t)\|,\ \|S_n(t)\|\leq Me^{\delta t},\ \forall t\geq0$$ and
$$\|AS(t)\|,\ \|A_nS_n(t)\|\leq\frac{M'}{t}e^{\delta t},\ \forall t>0,$$
where $M\geq1,$ $M'\geq1$ and $\delta\in \mathbb{R}.$
\end{lemma}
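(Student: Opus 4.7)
The plan is to deduce the lemma from standard analytic semigroup theory (as developed in Pazy~\cite{83}), the key point being to extract the \emph{uniformity} of the constants from the hypothesis of uniform sectoriality. I would first unpack the hypothesis: uniformly sectorial means that there exist constants $\delta\in\mathbb{R}$, an angle $\theta\in(\pi/2,\pi)$, and a constant $M_0\geq 1$, all independent of $n$, such that the resolvent sets of $A$ and every $A_n$ contain a common sector $\Sigma_{\delta,\theta}=\{\lambda\in\mathbb{C}\setminus\{\delta\}:|\arg(\lambda-\delta)|<\theta\}$, and on this sector the resolvents satisfy a common bound $\|(\lambda I-A)^{-1}\|,\,\|(\lambda I-A_n)^{-1}\|\leq M_0/|\lambda-\delta|$. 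Since $A$ and $A_n$ are densely defined and closed, the standard sectorial--generator theorem then yields that each generates an analytic semigroup given by the Dunford--Cauchy integral
\begin{equation*}
S(t)=\frac{1}{2\pi i}\int_{\Gamma}e^{\lambda t}(\lambda I-A)^{-1}\,d\lambda,\qquad S_n(t)=\frac{1}{2\pi i}\int_{\Gamma}e^{\lambda t}(\lambda I-A_n)^{-1}\,d\lambda,
\end{equation*}
where $\Gamma$ is a common piecewise-smooth contour running in $\Sigma_{\delta,\theta}$ (for instance, the boundary of a slightly smaller sector with vertex shifted to $\delta+1/t$ and rays at angle $\theta'\in(\pi/2,\theta)$).

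Next I would carry out the two estimates in parallel for $S(t)$ and $S_n(t)$, exploiting that both resolvents obey the \emph{same} bound on the \emph{same} contour. Parametrizing $\Gamma$ as the union of two rays $\lambda=\delta+\rho e^{\pm i\theta'}$ ($\rho\geq 1/t$) and a circular arc $\lambda=\delta+t^{-1}e^{i\psi}$ ($|\psi|\leq\theta'$), a direct estimate gives
\begin{equation*}
\|S(t)\|\leq \frac{M_0}{2\pi}\int_{\Gamma}\frac{|e^{\lambda t}|}{|\lambda-\delta|}\,|d\lambda|\leq Me^{\delta t},
\end{equation*}
with $M$ depending only on $M_0$ and $\theta'$; the same bound holds for $S_n(t)$ because the constants do not involve $n$. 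For the second pair of bounds I would use the identity $AS(t)=\frac{d}{dt}S(t)=\frac{1}{2\pi i}\int_{\Gamma}\lambda e^{\lambda t}(\lambda I-A)^{-1}d\lambda$ (and the analogue for $A_nS_n(t)$). Inserting the factor $\lambda$ shifts the integrand's decay rate and a parallel parametrization produces
\begin{equation*}
\|AS(t)\|\leq \frac{M_0}{2\pi}\int_{\Gamma}|\lambda|\,\frac{|e^{\lambda t}|}{|\lambda-\delta|}\,|d\lambda|\leq \frac{M'}{t}e^{\delta t},
\end{equation*}
the extra $1/t$ coming from the change of variables $\rho=s/t$ on the rays and from the circular arc of radius $1/t$; again the same bound applies to $A_nS_n(t)$ with the \emph{same} $M'$.

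Finally, since $M$ and $M'$ depend only on $M_0$, $\theta'$ and $\delta$, and not on the choice between $A$ and any particular $A_n$, one can replace them if necessary by $\max(M,1)$ and $\max(M',1)$ to obtain the normalization $M,M'\geq 1$ stated in the lemma, and the claimed uniform bounds follow. The main obstacle, and the only place where care is really needed, is to verify that the contour $\Gamma$ and the resolvent bound on it can genuinely be chosen \emph{independently of $n$}; this is exactly what the qualifier ``uniformly'' in the hypothesis is designed to supply, so once that word is correctly interpreted the rest of the argument is a verbatim repetition of the classical estimates from Pazy~\cite{83}.
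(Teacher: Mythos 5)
Your proposal is correct and rests on the same classical sectoriality theory the paper invokes, but you take the quantitative route where the paper takes a purely qualitative one: the paper's proof merely recalls that a densely defined, closed operator generates a strongly continuous analytic semigroup if and only if it is sectorial of type $(M,\delta)$ — stated there in the half-plane form $\sup_{\mathrm{Re}\,\lambda>\delta}\|(\lambda-\delta)R(\lambda,A)\|\leq M$ — observes that $A$ and all the $A_n$ share the same constants, notes $\limsup_{t\downarrow0}t\|AS(t)\|<\infty$, and declares the uniform bounds proved, whereas you actually derive $\|S_n(t)\|\leq Me^{\delta t}$ and $\|A_nS_n(t)\|\leq (M'/t)e^{\delta t}$ from the Dunford--Cauchy integral over an $n$-independent contour, which is where the uniformity genuinely becomes visible. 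Two small points to reconcile with the paper: first, its definition of ``uniformly sectorial'' (following Kunze--van Neerven) is the half-plane resolvent bound, so your sector of half-angle $\theta>\pi/2$ is not the hypothesis verbatim; you should add the standard Neumann-series extension of the resolvent estimate across the line $\mathrm{Re}\,\lambda=\delta$, which enlarges the half-plane to such a sector with constants depending only on $(M,\delta)$ and hence still independent of $n$. Second, in your estimate for $\|AS(t)\|$ the ratio $|\lambda|/|\lambda-\delta|$ on the rays is only bounded by $1+|\delta|t$, so the clean contour bound is for $\|(A-\delta)S(t)\|\leq (C/t)e^{\delta t}$, giving $\|AS(t)\|\leq (C/t)(1+|\delta|t)e^{\delta t}$; this yields the stated form $\|AS(t)\|\leq (M'/t)e^{\delta t}$ only on bounded intervals $(0,T]$ after absorbing $|\delta|T$ into $M'$ (which is all the paper ever uses), or after slightly enlarging $\delta$. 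With these two adjustments your argument is complete, and in fact more detailed than the paper's own proof.
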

\begin{proof}
Recall that a closed operator $A$ generates a strongly continuous analytic semigroup on a Banach space $X$ if and only if $A$ is densely defined and sectorial, that is, there exist constants $M\geq1$ and $\delta\in \mathbb{R}$ such that $\{\lambda\in C: Re\lambda>\delta\}$ is contained in the resolvent set $\rho(A)$ and
$$\mathop{{\rm sup}}_{Re\lambda>\delta}\|(\lambda-\delta)R(\lambda,A)\|\leq M,$$
the constants $M$ and $\delta$ are called the sectoriality constants of $A.$  in this context, we say that $A$ is sectorial of type $(M,\delta).$ Thus, $A$ generates a strongly continuous analytic semigroup $\{S(t), t\geq0\}$ on $X,$ it follows that $S(t)$ maps $X$ into the domain $D(A)$ and $\lim\sup_{t\downarrow0}t\|AS(t)\|<\infty.$ Since $A_n,n=1,2,3,\dots,$ satisfy the same conditions, we know that $A_n$ generates a strongly continuous analytic semigroup $\{S_n(t), t\geq0\}$ on $X$ and $\lim\sup_{t\downarrow0}t\|A_nS_n(t)\|<\infty$ for each $n=1,2,3,\dots.$ Therefore, the uniform bounds of the operators and semigroups have been proved.
\end{proof}
Now we make the following assumption:
\begin{assumption}
\label{A2}
\
\begin{enumerate}[(i)]
\item The operators $A$ and $A_n,n=1,2,3,\dots,$ are densely defined, closed and uniformly
sectorial on $X$ in the sense, there exist $M\geq1$ and $\delta\in \mathbb{R}$ such that $A$ and $A_n$ are sectorial of type $(M,\delta)$ for each $n=1,2,3,\dots.$
\item The operators $A_n$ converge to $A$ in the strong resolvent sense:
$$\lim_{n\rightarrow\infty}R(\lambda,A_n)x=R(\lambda,A)x$$
for any $Re\lambda>\delta$ and $x\in X.$
\end{enumerate}
\end{assumption}
\begin{remark}
If Assumption (\ref{A2})(i) holds, then according to Lemma \ref{lem3.2}, $A$ and $A_n,n=1,2,3,\dots,$ generate strongly continuous analytic semigroups $S(t)$ and $S_n(t),$ respectively, which satisfy the uniform bounds in Lemma \ref{lem3.2}.
\end{remark}

The following Trotter-Kato approximation theorem is well known.
\begin{proposition}
\label{Pro2.3}
\rm{(\cite{11'})}
\em Let Assumption (\ref{A2}) hold, then we have
$$\lim_{n\rightarrow \infty}S_n(t)x=S(t)x$$
for any $t\in[0,\infty)$ and $x\in X,$ where the convergence is uniform on compact subsets of $[0,\infty)\times X$ and
$$\lim_{n\rightarrow \infty}A_nS_n(t)x=AS(t)x$$
for any $t\in(0,\infty)$ and $x\in X,$ where the convergence is uniform on compact subsets of $(0,\infty)\times X.$
\end{proposition}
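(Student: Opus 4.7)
The plan is to exploit the analyticity guaranteed by Assumption \ref{A2}(i) and represent both semigroups by the Dunford contour integrals
\[
S(t)=\frac{1}{2\pi i}\int_\Gamma e^{\lambda t}R(\lambda,A)\,d\lambda,\qquad S_n(t)=\frac{1}{2\pi i}\int_\Gamma e^{\lambda t}R(\lambda,A_n)\,d\lambda,
\]
where $\Gamma$ is a common contour consisting of two rays entering the left half-plane at some angle $\theta\in(\pi/2,\pi)$ and lying to the right of the common sectoriality threshold $\delta$. Uniform sectoriality forces $\Gamma\subset\rho(A)\cap\bigcap_n\rho(A_n)$, so both formulas are valid simultaneously. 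The whole proof then reduces to a dominated convergence argument on $\Gamma$, driven by the strong resolvent convergence of Assumption \ref{A2}(ii).

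For the first statement, I would produce an $n$-independent $L^1(\Gamma)$ majorant of the integrand. Uniform sectoriality provides $|R(\lambda,A_n)x|\le M|x|/|\lambda-\delta|$, and $|e^{\lambda t}|$ decays like $\exp(-c|\lambda|t)$ on the two rays of $\Gamma$ because $\cos\theta<0$; hence for each fixed $t>0$ and $x\in X$ the integrand is dominated on $\Gamma$ by a function depending only on $|x|$ and $t$. Since $R(\lambda,A_n)x\to R(\lambda,A)x$ pointwise in $\lambda\in\Gamma$ by Assumption \ref{A2}(ii), Lebesgue's theorem yields $S_n(t)x\to S(t)x$ for every $t>0$ and $x\in X$, and the case $t=0$ is trivial. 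For the second statement, differentiating the contour formula under the integral sign (justified by the same estimates) gives
\[
AS(t)=\frac{1}{2\pi i}\int_\Gamma \lambda e^{\lambda t}R(\lambda,A)\,d\lambda,\qquad A_nS_n(t)=\frac{1}{2\pi i}\int_\Gamma \lambda e^{\lambda t}R(\lambda,A_n)\,d\lambda,
\]
and the extra factor $|\lambda|$ is absorbed by the exponential decay of $|e^{\lambda t}|$ as soon as $t>0$, so the same dominated convergence argument delivers $A_nS_n(t)x\to AS(t)x$ pointwise.

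To upgrade pointwise convergence on $[0,\infty)\times X$ (resp.\ on $(0,\infty)\times X$) to uniform convergence on compact subsets, I would combine the uniform operator bounds of Lemma \ref{lem3.2} with a standard $\varepsilon/3$ argument. Equicontinuity in $x$ is immediate from $\|S_n(t)\|\le Me^{\delta T}$, and equicontinuity in $t$ on any compact subinterval $[a,b]\subset(0,\infty)$ follows from $\|A_nS_n(t)\|\le M'e^{\delta t}/t$ via the fundamental theorem of calculus; the boundary case $t=0$ is handled by strong continuity together with a density argument. Covering the compact parameter set by a finite $\varepsilon$-net in $X$ and a finite partition of the $t$-interval then converts pointwise convergence into uniform convergence.

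The main technical obstacle is arranging the $L^1(\Gamma)$ dominators so that they remain uniform in $t$ across the chosen compact subinterval. This is exactly why the convergence for $A_nS_n(t)x$ is restricted to compact subsets of $(0,\infty)\times X$: the candidate dominator $|\lambda e^{\lambda t}|/|\lambda-\delta|$ fails to be uniformly integrable as $t\downarrow 0$. Once this uniform domination is secured on the relevant compact $t$-intervals, the remaining semigroup bookkeeping is routine and essentially encoded in Lemma \ref{lem3.2}.
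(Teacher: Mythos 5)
The first thing to note is that the paper itself offers no proof of this proposition: it is quoted as a known result from Kunze and van Neerven \cite{11'} (``The following Trotter-Kato approximation theorem is well known''), so there is no in-paper argument to compare yours against. Your contour-integral route is in fact the standard way this statement is proved for analytic semigroups, and its outline is sound: a common contour with an $n$-independent integrable majorant, dominated convergence driven by strong resolvent convergence, and the extra factor $\lambda$ in the representation of $A_nS_n(t)$ absorbed by $e^{\lambda t}$ only for $t>0$, which is exactly why that limit is restricted to compact subsets of $(0,\infty)\times X$.

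There is, however, one genuine gap as written. Assumption (\ref{A2})(ii) gives $R(\lambda,A_n)x\to R(\lambda,A)x$ only for ${\rm Re}\,\lambda>\delta$, whereas your contour $\Gamma$ must leave that half-plane: on a vertical line $|e^{\lambda t}|$ is constant and $\|R(\lambda,A_n)\|=O(1/|{\rm Im}\,\lambda|)$, so the Dunford integral is not absolutely convergent there, and the rays of $\Gamma$ with angle $\theta\in(\pi/2,\pi)$ necessarily run into $\{{\rm Re}\,\lambda\le\delta\}$, where Assumption (\ref{A2})(ii) says nothing. The same issue affects the assertion $\Gamma\subset\rho(A)\cap\bigcap_n\rho(A_n)$, since the paper's sectoriality is a half-plane condition. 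Both points are repaired by the Neumann series $R(\lambda,A_n)=\sum_{k\ge0}(\mu-\lambda)^kR(\mu,A_n)^{k+1}$, valid for $|\mu-\lambda|<|\mu-\delta|/M$ with tails controlled uniformly in $n$ by the uniform bound $M$: this extends all resolvents, with uniform estimates, to a common sector of angle greater than $\pi/2$ containing $\Gamma$, and, combined with $R(\mu,A_n)^{k+1}x\to R(\mu,A)^{k+1}x$ (strong convergence plus uniform boundedness), it propagates the strong resolvent convergence from the half-plane to $\Gamma$; only then is your dominated convergence step legitimate. Two smaller loose ends in the uniformity upgrade: equicontinuity in $t$ of $A_nS_n(t)x$ requires a uniform bound on $A_n^2S_n(t)=\bigl(A_nS_n(t/2)\bigr)^2$, not merely on $A_nS_n(t)$; and uniform convergence of $S_n(t)x$ down to $t=0$ needs the density argument carried out with vectors of the form $x=R(\mu,A)y$, for which $\sup_n|S_n(t)R(\mu,A_n)y-R(\mu,A_n)y|\le Ct|y|$, together with the uniform bound $\|S_n(t)\|\le Me^{\delta T}$. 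These last items are routine, but they should be written out.
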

\begin{theorem}
\label{the(3.2)}
Suppose that Assumptions (\ref{A1}) and (\ref{A2}) hold. Let $x(t)$ and $x_n(t)$ be the mild solutions of equations (\ref{system(1.1)}) and (\ref{system(3.1)}), respectively. Then, for each $T\in(0,\infty),$
$$\sup_{0\leq t\leq T}E|x_n(t)-x(t)|^p=0\ \ as\ n\rightarrow\infty.$$
\end{theorem}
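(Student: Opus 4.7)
The plan is to estimate $E|x_n(t)-x(t)|^p$ by subtracting the two mild-solution representations and splitting the resulting difference into the six natural pieces coming from (a) the initial-value term $[S_n(t)-S(t)][\varphi(0)+f(0,\varphi)]$, (b) the pointwise neutral term $f(t,\pi_tx_n)-f(t,\pi_tx)$, (c) the convolution $\int_0^t[A_nS_n(t-s)f(s,\pi_sx_n)-AS(t-s)f(s,\pi_sx)]ds$, (d) the analogous $S_n-S$ convolution against $a$, (e) the stochastic convolution of $S_n-S$ against $b$, and (f) the impulsive sum involving $S_n(t-t_k)I_k(x_n(t_k))-S(t-t_k)I_k(x(t_k))$. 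Applying the elementary inequality $|\sum_{i=1}^{6}y_i|^p\le 6^{p-1}\sum_{i=1}^{6}|y_i|^p$ and taking expectations reduces the problem to estimating each term individually.

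Each piece will be handled by splitting it further into a ``same-argument, operator-difference'' part, to which Proposition \ref{Pro2.3} (Trotter--Kato) together with dominated convergence applies, and a ``same-operator, function-difference'' part, to which the Lipschitz hypotheses of Assumption \ref{A1} apply. For (b), (d) and (f) this splitting is straightforward. For (e) I will invoke Proposition \ref{pro2.1} when $p>2$ and Proposition \ref{pro2.2} when $p=2$ to pull the supremum/expectation inside, then use the uniform semigroup bound from Lemma \ref{lem3.2} and A1(ii) to extract a $\int_0^tE|x_n(s)-x(s)|^p\,ds$ term plus a Trotter--Kato vanishing remainder. For (a) the strong pointwise convergence $S_n(t)x\to S(t)x$, uniform on the compact set $[0,T]$, yields direct convergence to zero.

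The delicate piece is (c): the integrand contains the singular factor $A_nS_n(t-s)$, for which Lemma \ref{lem3.2} only gives $M'e^{\delta t}/(t-s)$. I will use the factorisation $f=A^{-\alpha}(A^{\alpha}f)$, combined with the analyticity bound $\|A^{1-\alpha}S(t)\|\le C_\alpha\,t^{-(1-\alpha)}e^{\delta t}$ and its uniform-in-$n$ analogue for $A_n^{1-\alpha}S_n(t)$ guaranteed by the uniform sectoriality in A2(i). Since $1/p<\alpha<1$, H\"older's inequality in $s$ produces the finite factor $\int_0^t(t-s)^{-p(1-\alpha)}ds$, which controls both halves: the Lipschitz half through A1(iii), and the operator-difference half through Proposition \ref{Pro2.3} with the integrable majorant $(t-s)^{-(1-\alpha)}$ allowing passage of the limit under the integral.

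Assembling the bounds produces an inequality of the form
\[
E|x_n(t)-x(t)|^p \;\le\; \varepsilon_n(T)+C\int_0^tE|x_n(s)-x(s)|^p\,ds+C\!\!\sum_{0<t_k<t}\!\!E|x_n(t_k)-x(t_k)|^p,
\]
where $\varepsilon_n(T)\to 0$ as $n\to\infty$ uniformly in $t\in[0,T]$. An impulsive Gronwall argument (iterating across $t_1<\cdots<t_m$ and using A1(iv) at each jump) then closes the loop and yields $\sup_{0\le t\le T}E|x_n(t)-x(t)|^p\to 0$. The main obstacle will be the singular-kernel term (c): one must verify that the analytic bound on $A_n^{1-\alpha}S_n(t)$ is genuinely uniform in $n$, and that strong Trotter--Kato convergence survives composition with $A^{-\alpha}$ in the presence of the $(t-s)^{-(1-\alpha)}$ singularity, which is precisely where the restriction $\alpha>1/p$ enters.
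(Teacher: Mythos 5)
Your overall strategy (subtract the two mild-solution formulas, treat the operator-difference pieces by Proposition \ref{Pro2.3} plus dominated convergence against the a priori bound $\sup_{0\le t\le T}E|x(t)|^p\le C(T)$, and the state-difference pieces by Assumption \ref{A1} plus Gronwall) is the same as the paper's, but your closing inequality has a concrete gap. The neutral piece (b) gives, via Assumption \ref{A1}(iii), the bound $C_4\|A^{-\alpha}\|\,|x_n(t)-x(t)|$ \emph{at the current time} $t$, not under an integral, and this term has simply vanished from your displayed Gronwall inequality. It cannot be handed to Gronwall (impulsive or not); it must be absorbed into the left-hand side, and if you absorb it only after applying $|\sum_i y_i|^p\le 6^{p-1}\sum_i|y_i|^p$ you would need $6^{p-1}(C_4\|A^{-\alpha}\|)^p<1$, which the hypotheses do not provide. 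The paper's device is to move this term, together with the Lipschitz part of the impulsive sum, to the left of the almost-sure inequality \emph{before} raising to the $p$th power; that is precisely why the factor $[1-C_4\|A^{-\alpha}\|-Me^{\delta T}\sum_{k=1}^m h_k]^p$, positive by the standing condition of Lemma \ref{lem(3.1)}, multiplies $\sup_{0\le t\le T}E|x_n(t)-x(t)|^p$ in (\ref{eq(3.4)}). Your impulsive-Gronwall iteration across $t_1<\dots<t_m$ is a legitimate alternative for the jump differences (the paper instead absorbs them too), but the pointwise neutral term has no such escape route: the smallness condition must be used, and used before the $6^{p-1}$ step.

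On the singular piece (c) you genuinely diverge from the paper: the paper estimates $\int_0^t A_nS_n(t-s)[f(s,\pi_sx)-f(s,\pi_sx_n)]\,ds$ directly from Lemma \ref{lem3.2} and $|f|\le\|A^{-\alpha}\|\,|A^{\alpha}f|$, and handles the corresponding remainder in $\beta(n,T)$ by dominated convergence with the same crude bound, without any fractional-power interpolation --- at the price that the kernel bound $M'e^{\delta (t-s)}/(t-s)$ is not $p$-integrable, a point the paper glosses over. Your route through $\|A_n^{1-\alpha}S_n(t-s)\|\lesssim (t-s)^{-(1-\alpha)}$ and H\"older is the more careful one (after H\"older with conjugate exponents the kernel appears as $(t-s)^{-(1-\alpha)p/(p-1)}$, integrable exactly when $\alpha>1/p$, so your stated restriction is the right one even though you wrote $(t-s)^{-p(1-\alpha)}$), but as you yourself note it requires a uniform-in-$n$ bound on $A_n^{1-\alpha}S_n(t)$ and a comparison of $A_n S_n(t-s)A^{-\alpha}$ (or $A_n^{\alpha}f$) with the hypothesis on $A^{\alpha}f$, neither of which is supplied by Assumption \ref{A2} as stated; flagging this as the main obstacle does not discharge it. So the proposal needs two repairs before it is a proof: restore and absorb the pointwise neutral term as the paper does, and either supply the uniform fractional-power estimates your treatment of (c) invokes or revert to the paper's cruder bound.
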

\begin{proof}
First, we claim that there exists a positive constant $C(T)>0$ such that the mild solution of equation (\ref{system(1.1)}) satisfies
$$\mathop{{\rm sup}}_{0\leq t\leq T}E|x(t)|^p\leq C(T)$$
for each $0<T<\infty.$

Now we consider the mild solution of equation (\ref{system(1.1)}).
\begin{align*}
|x(t)|&\leq Me^{\delta T}|\varphi(0)|+Me^{\delta T}\|A^{-\alpha}\|C_5(1+\|\varphi\|_c)+\|A^{-\alpha}\|C_5(1+|x(t)|)\\
&\quad+\left|\int_0^tAS(t-s)f(s,\pi_sx)ds\right|+\left|\int_0^tS(t-s)a(s,\pi_sx)ds\right|\\
&\quad+\left|\int_0^tS(t-s)b(s,\pi_sx)d\omega(s)\right|+Me^{\delta T}\sum_{k=1}^m|I_k(x(t_k))|\quad a.s.\ t\in[0,T].
\end{align*}
Therefore, by Proposition \ref{pro2.1} (or Proposition \ref{pro2.2} for $p=2$) and Lemma \ref{lem3.2}  we obtain
\begin{align*}
&[1-C_5\|A^{-\alpha}\|-Me^{\delta T}\sum_{k=1}^mh_k]^p\mathop{{\rm sup}}_{0\leq t\leq T}E|x(t)|^p\\
&\leq4^{p-1}\bigg\{E[Me^{\delta t}|\varphi(0)|+Me^{\delta t}\|A^{-\alpha}\|C_5(1+\|\varphi\|_c)+C_5\|A^{-\alpha}\|+Me^{\delta T}mh_0]^p\\
&\quad +2^{p-1}(T\frac{M'e^{\delta T}}{t}C_5\|A^{-\alpha}\|)^p+T^pM^pe^{p\delta T}C_3+TM^pe^{p\delta T}c(p,T)C_3\\
&\quad +[{(2T)}^{p-1}(\frac{M'e^{\delta T}}{t}C_5\|A^{-\alpha}\|)^p+T^{p-1}M^pe^{p\delta T}C_3+M^pe^{p\delta T}c(p,T)C_3]\int_0^t\mathop{{\rm sup}}_{0\leq s\leq t}E|x(s)|^pds\bigg\}.
\end{align*}
An application of the Bellman-Gronwall's Lemma yields
\begin{align*}
\mathop{{\rm sup}}_{0\leq t\leq T}E|x(t)|^p&\leq\frac{C_1(T)}{[1-C_5\|A^{-\alpha}\|-Me^{\delta T}\sum_{k=1}^mh_k]^p}\exp\left\{\frac{tC_2(T)}{[1-C_5\|A^{-\alpha}\|-Me^{\delta T}\sum_{k=1}^mh_k]^p}\right\}\\
&<C(T),
\end{align*}
where $C_1(T),$ $C_2(T)$ and $C(T)$ are positive constants and the claim follows.

Next, considering $x_n(t)-x(t)$, we have
\begin{align}
\label{eq(3.4)}
&[1-C_4\|A^{-\alpha}\|-Me^{\delta T}\sum_{k=1}^mh_k]^p\mathop{{\rm sup}}_{0\leq t\leq T}E|x_n(t)-x(t)|^p\nonumber\\
&\leq9^{p-1}\bigg\{E|[S_n(t)-S(t)]\varphi(0)|^p+E|[S_n(t)-S(t)]f(0,\varphi)|^p\nonumber\\
&\quad+E\left|\int_0^t[AS(t-s)-A_nS_n(t-s)]f(s,\pi_sx)ds\right|^p+E\left|\int_0^t[S_n(t-s)-S(t-s)]a(s,\pi_sx)ds\right|^p\nonumber\\
&\quad+E\left|\int_0^t[S_n(t-s)-S(t-s)]b(s,\pi_sx)d\omega(s)\right|^p+E\left|\int_0^tA_nS_n(t-s)[f(s,\pi_sx)-f(s,\pi_sx_n)]ds\right|^p\nonumber\\
&\quad+E\left|\int_0^tS_n(t-s)[a(s,\pi_sx_n)-a(s,\pi_sx)]ds\right|^p+E\left|\int_0^tS_n(t-s)[b(s,\pi_sx_n)-b(s,\pi_sx)]d\omega(s)\right|^p\nonumber\\
&\quad+E\left|\sum_{k=1}^m[S_n(t-t_k)-S(t-t_k)]I_k(x(t_k))\right|^p\bigg\}.
\end{align}

We estimate each term in the right hand side of (\ref{eq(3.4)}).
Since $E|[S_n(t)-S(t)]\varphi(0)|\leq2Me^{\delta T}E|\varphi(0)|$ and $E|[S_n(t)-S(t)]f(0,\varphi)|\leq2Me^{\delta T}C_5\|A^{-\alpha}\|(1+E\|\varphi\|_c)$
From Proposition \ref{Pro2.3} and the Lebesgue dominated convergence theorem, one can get
\begin{align}
\label{eq(3.5)}
E|[S_n(t)-S(t)]\varphi(0)|^p\rightarrow0\quad as\ n\rightarrow\infty
\end{align}
and
\begin{align}
\label{eq(3.6)}
E|[S_n(t)-S(t)]f(0,\varphi)|^p\rightarrow0\quad as\ n\rightarrow\infty.
\end{align}

Next, by Lemma \ref{lem3.2}, we have
\begin{align}
\label{eq(3.7)}
&E\left|\int_0^tA_nS_n(t-s)[f(s,\pi_sx)-f(s,\pi_sx_n)]ds\right|^p\nonumber\\
&\leq T^{p-1}E\int_0^t\|A_nS_n(t-s)\|^p|f(s,\pi_sx)-f(s,\pi_sx_n)|^pds\nonumber\\
&\leq T^{p-1}\left(\frac{M'e^{\delta T}}{t}C_4\|A^{-\alpha}\|\right)^p\int_0^tE|x_n(s)-x(s)|^pds.
\end{align}
By Assumption (\ref{A1}), it follows that
\begin{align}
\label{eq(3.8)}
E\left|\int_0^tS_n(t-s)[a(s,\pi_sx_n)-a(s,\pi_sx)]ds\right|^p&\leq T^{p-1}M^pe^{p\delta T}C_1\int_0^tE|x_n(s)-x(s)|^pds.
\end{align}
Using Proposition \ref{pro2.1}, we estimate the stochastic integral as follows:
\begin{align}
\label{eq(3.9)}
E\left|\int_0^tS_n(t-s)[b(s,\pi_sx_n)-b(s,\pi_sx)]d\omega(s)\right|^p\leq c(p,T)M^pe^{p\delta T}C_2\int_0^tE|x_n(s)-x(s)|^pds.
\end{align}
Using the estimates (\ref{eq(3.7)})-(\ref{eq(3.8)}) and the inequality (\ref{eq(3.4)}) reduces to
\begin{align*}
&[1-C_4\|A^{-\alpha}\|-Me^{\delta T}\sum_{k=1}^mh_k]^p\mathop{{\rm sup}}_{0\leq t\leq T}E|x_n(t)-x(t)|^p\\
&\leq\beta(n,T)+9^{p-1}\bigg[T^{p-1}\left(\frac{M'e^{\delta T}}{t}C_4\|A^{-\alpha}\|\right)^p+T^{p-1}M^pe^{p\delta T}C_1\nonumber\\
&\quad+c(p,T)M^pe^{p\delta T}C_2\bigg]\int_0^tE|x_n(s)-x(s)|^pds,
\end{align*}
where
\begin{align}
\label{eq(3.10)}
\beta(n,T)&=9^{p-1}\bigg\{E|[S_n(t)-S(t)]\varphi(0)|^p+E|[S_n(t)-S(t)]f(0,\varphi)|^p\nonumber\\
&\quad+E\left|\int_0^t[AS(t-s)-A_nS_n(t-s)]f(s,\pi_sx)ds\right|^p\nonumber\\
&\quad+E\left|\int_0^t[S_n(t-s)-S(t-s)]a(s,\pi_sx)ds\right|^p\nonumber\\
&\quad+E\left|\int_0^t[S_n(t-s)-S(t-s)]b(s,\pi_sx)d\omega(s)\right|^p\nonumber\\
&\quad+E\left|\sum_{k=1}^m[S_n(t-t_k)-S(t-t_k)]I_k(x(t_k))\right|^p\bigg\}.
\end{align}
The first two terms in the right hand side of (\ref{eq(3.10)}) tends to zero as $n\rightarrow\infty$ by (\ref{eq(3.5)})-(\ref{eq(3.6)}). By Proposition \ref{Pro2.3}, we now have
\begin{align*}
&E\left|\int_0^t[AS(t-s)-A_nS_n(t-s)]f(s,\pi_sx)ds\right|^p\\
&\leq T^{p-1}E\int_0^t\|AS(t-s)-A_nS_n(t-s)\|^p|f(s,\pi_sx)|^pds\\
&\leq2^{2p-1}\left(T\frac{M'e^{\delta T}}{t}C_5\|A^{-\alpha}\|\right)^p[1+C(T)]<\infty.
\end{align*}
Hence, the third term of (\ref{eq(3.10)}) tends to zero as $n\rightarrow\infty$ in view of Proposition \ref{Pro2.3} together with the dominated convergence theorem. Regarding the fourth term, note that
\begin{align*}
E\left|\int_0^t[S_n(t-s)-S(t-s)]a(s,\pi_sx)ds\right|^p
&\leq(2TMe^{\delta T})^pC_3[1+C(T)]<\infty.
\end{align*}
By the same theorem, this term also tends to zero. Next, considering the stochastic integral term,
\begin{align*}
&E\left|\int_0^t[S_n(t-s)-S(t-s)]b(s,\pi_sx)d\omega(s)\right|^p\\
&\leq c(p,T)E\int_0^t\|S_n(t-s)-S(t-s)\|^p|b(s,\pi_sx)|_\lambda^pds\\
&\leq c(p,T)(2Me^{\delta T})^pTC_3(1+C(T))<\infty.
\end{align*}
Therefore, the stochastic integral term also tends to zero. Finally, we study the last term,
\begin{align*}
&E\left|\sum_{k=1}^m[S_n(t-t_k)-S(t-t_k)]I_k(x(t_k))\right|^p\\
&\leq m^{p-1}\sum_{k=1}^m[\|S_n(t-t_k)-S(t-t_k)\|^p|I_k(x(t_k))|^p]\\
&\leq2^{2p-1}m^{p-1}M^pe^{p\delta T}\sum_{k=1}^m[|I_k(x(t_k))-I_k(0)|^p+|I_k(0)|^p]\\
&\leq2^{2p-1}m^{p-1}M^pe^{p\delta T}\sum_{k=1}^m{h_k}^{p}C(T)+m{h_0}^p<\infty.
\end{align*}
By Lebesgue¡¯s dominated convergence theorem,  it can be shown as before that the last term also tends to zero.
Thus, $\beta(n,T)\rightarrow0$ as $n\rightarrow\infty,$ which completes the proof.
\end{proof}
Let us next study the zeroth-order approximations, that is approximating a impulsive neutral SPDE by a impulsive neutral deterministic PDE.

We consider the neutral SPDE:
\begin{align}
\label{system(3.11)}
d[x_\varepsilon(t)+f(t,\pi_tx_\varepsilon)]&=[A_\varepsilon x_\varepsilon(t)+a(t,\pi_tx_\varepsilon)]dt+\varepsilon b(t,\pi_tx_\varepsilon)d\omega(t),\ t\in[0,T],\ t\neq t_k,\\
\label{system(3.12}
 \Delta x_\varepsilon(t_k)&=x_\varepsilon(t_k^+)-x_\varepsilon(t_k^-)=I_k(x_\varepsilon(t_k)),\ t=t_k,\ k=1,2,\dots,m,\\
\label{system(3.13)}
x_\varepsilon(t)&=\varphi(t),\ t\in[-r,0],
\end{align}
where $A_\varepsilon(\varepsilon>0)$ is the infinitesimal generator of a strongly continuous analytic semigroup $\{S_\varepsilon(t),t\geq0\}$ of bounded linear operators on $X,$ along with the impulsive neutral deterministic PDE:
\begin{align}
\label{system(3.14)}
d[\overline{x}(t)+f(t,\pi_t\overline{x})]&=[A\overline{x}(t)+a(t,\pi_t\overline{x})]dt,\ t\in[0,T],\ t\neq t_k,\\
\label{system(3.15}
 \Delta \overline{x}(t_k)&=\overline{x}(t_k^+)-\overline{x}(t_k^-)=I_k(\overline{x}(t_k)),\ t=t_k,\ k=1,2,\dots,m,\\
\label{system(3.16)}
\overline{x}(t)&=\varphi(t),\ t\in[-r,0],
\end{align}
The mild solutions of equations (\ref{system(3.11)}) and (\ref{system(3.14)}) are
\begin{align}
\label{eq(3.17)}
x_\varepsilon(t)&=S_\varepsilon(t)[\varphi(0)+f(0,\varphi)]-f(t,\pi_tx_\varepsilon)-\int_0^tA_\varepsilon S_\varepsilon(t-s)f(s,\pi_sx_\varepsilon)ds\nonumber\\
&\quad+\int_0^tS_\varepsilon(t-s)a(s,\pi_sx_\varepsilon)ds+\int_0^tS_\varepsilon(t-s)b(s,\pi_sx_\varepsilon)d\omega(s)\nonumber\\
&\quad+\sum_{0<t_k<t}S_\varepsilon(t-t_k)I_k(x_\varepsilon(t_k)),\quad a.s.,\ t\in [0,T],
\end{align}
and
\begin{align}
\label{eq(3.18)}
\overline{x}(t)&=S(t)[\varphi(0)+f(0,\varphi)]-f(t,\pi_t\overline{x})-\int_0^tAS(t-s)f(s,\pi_s\overline{x})ds\nonumber\\
&\quad+\int_0^tS(t-s)a(s,\pi_s\overline{x})ds+\sum_{0<t_k<t}S(t-t_k)I_k(\overline{x}(t_k)),\quad a.s.,\ t\in [0,T],
\end{align}
respectively. For each $\varepsilon>0,$ one can show by Lemma \ref{lem(3.1)} that equation (\ref{system(3.11)}) has a unique mild solution $x_\varepsilon\in C([0,T],L^p(\Omega,X)),$ given by equation (\ref{eq(3.17)}); and equation (\ref{system(3.14)}) also has a unique mild solution given by equation (\ref{eq(3.18)}) when $b\equiv0$ as a special case.

We now make the following assumption to consider the next result.
\begin{assumption}
\label{A3}
\
\begin{enumerate}[(i)]
\item Let $A_\varepsilon(\varepsilon>0)$ is densely defined, closed and uniformly
sectorial on $X$ in the sense, there exist $M\geq1$ and $\delta\in \mathbb{R}$ such that $A_\varepsilon$ is sectorial of type $(M,\delta)$ for each $\varepsilon>0.$
\item The operators $A_\varepsilon$ converge to $A$ in the strong resolvent sense:
$$\lim_{\varepsilon\downarrow0}R(\lambda,A_\varepsilon)x=R(\lambda,A)x$$
for any $Re\lambda>\delta$ and $x\in X.$
\end{enumerate}
\end{assumption}
\begin{remark}
Under Assumption (A3), we have $\lim_{\varepsilon\downarrow0}S_\varepsilon(t)x=S(t)x,$ uniformly on compact subsets of $[0,\infty)\times X$ and $\lim_{\varepsilon\downarrow0}A_\varepsilon S_\varepsilon(t)x=AS(t)x,$ uniformly on compact subsets of $(0,\infty)\times X.$
\end{remark}
In the following result, we estimate the error in the approximations.
\begin{theorem}
\label{the3.7}
Suppose that Assumptions (A1) and (A3) hold. Let $x_\varepsilon(t)$ and $\overline{x}(t)$ be the mild solutions given by \ref{eq(3.17)} and \ref{eq(3.18)}, respectively. Then
$$E|x_\varepsilon(t)-\overline{x}(t)|^p\leq\tau(\varepsilon)\phi(t),$$
where $\phi(t)$ is a positive exponentially increasing function and $\tau(\varepsilon)$ is a positive function decreasing monotonically to zero as $\varepsilon\downarrow0.$
\end{theorem}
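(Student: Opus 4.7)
The plan is to mirror the proof of Theorem \ref{the(3.2)} almost step for step, with the parameter $\varepsilon\downarrow 0$ playing the role of $n\to\infty$ and the diffusion term now carrying the explicit scaling factor $\varepsilon$ from \eqref{system(3.11)}. First I would establish a uniform a-priori bound $\sup_{0\le t\le T}E|x_\varepsilon(t)|^p\le C(T)$ independent of $\varepsilon\in(0,1]$ by repeating the Bellman--Gronwall computation used for $x(t)$ at the start of the proof of Theorem \ref{the(3.2)}; Assumption \ref{A3}(i) supplies exactly the sectoriality data $(M,\delta)$ needed so that Lemma \ref{lem3.2} applies to $S_\varepsilon$ uniformly in $\varepsilon$. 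I then subtract \eqref{eq(3.18)} from \eqref{eq(3.17)} and, exactly as in display \eqref{eq(3.4)}, add and subtract intermediate terms inside each of the six integrals and sums so that every piece splits into a ``semigroup-approximation'' part, in which $S_\varepsilon-S$ or $A_\varepsilon S_\varepsilon-AS$ acts on the already-known deterministic process $\overline{x}$, and a ``Lipschitz-in-solution'' part controlled by Assumption \ref{A1} and $|x_\varepsilon-\overline{x}|$.

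Applying $E|\cdot|^p$, the convexity bound $|\sum_{i=1}^N a_i|^p\le N^{p-1}\sum|a_i|^p$, Propositions \ref{pro2.1} and \ref{pro2.2}, and absorbing the $f$-term on the left via $\|A^{-\alpha}\|C_4$ exactly as in Theorem \ref{the(3.2)}, I arrive at an inequality of the form
\[
\Bigl[1-C_4\|A^{-\alpha}\|-Me^{\delta T}\sum_{k=1}^m h_k\Bigr]^p E|x_\varepsilon(t)-\overline{x}(t)|^p
\le \tau(\varepsilon)+K(T)\int_0^t E|x_\varepsilon(s)-\overline{x}(s)|^p\,ds,
\]
where $\tau(\varepsilon)$ collects the six semigroup-approximation $p$th-moment quantities together with the new $\varepsilon$-scaled stochastic contribution
\[
E\Bigl|\varepsilon\int_0^tS_\varepsilon(t-s)b(s,\pi_sx_\varepsilon)\,d\omega(s)\Bigr|^p
\le \varepsilon^p c(p,T)M^pe^{p\delta T}C_3T(1+C(T)),
\]
which is $O(\varepsilon^p)$. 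The Bellman--Gronwall lemma then yields $E|x_\varepsilon(t)-\overline{x}(t)|^p\le\tau(\varepsilon)\phi(t)$ with $\phi(t)$ of the exponentially increasing form $C_1\exp(C_2 t)$; if the resulting $\tau$ is not automatically monotone in $\varepsilon$ I would replace it by its non-increasing upper envelope $\widetilde\tau(\varepsilon):=\sup_{0<\varepsilon'\le\varepsilon}\tau(\varepsilon')$, which preserves both the estimate and the limit.

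The main obstacle will be verifying that $\tau(\varepsilon)\to 0$ as $\varepsilon\downarrow 0$: the Remark following Assumption \ref{A3} delivers only pointwise convergences $S_\varepsilon(t)x\to S(t)x$ and $A_\varepsilon S_\varepsilon(t)x\to AS(t)x$, and these must be upgraded to convergence in $L^p(\Omega\times[0,T])$ of the relevant integrands by the dominated convergence theorem. The required dominators are supplied by (i) the a-priori bound on $x_\varepsilon$, (ii) the uniform semigroup bounds of Lemma \ref{lem3.2}, and (iii) integrability of the singular kernel arising from $\|A_\varepsilon S_\varepsilon(t-s)\|$ in the neutral term, which is handled by factoring through $\|A^{-\alpha}\|$ together with Assumption \ref{A1}(iii) and invoking $\alpha>1/p$ from Lemma \ref{lem(3.1)} so that $(t-s)^{\alpha-1}$ is integrable on $[0,T]$. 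The deterministic semigroup-difference terms acting on $f$, $a$ and $I_k$ are then disposed of exactly as the analogous terms were in Theorem \ref{the(3.2)}.
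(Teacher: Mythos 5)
Your proposal is correct and follows essentially the same route as the paper's proof: decompose $x_\varepsilon-\overline{x}$ so that the $f$- and impulse-Lipschitz contributions are absorbed on the left, put the Lipschitz integrals into a Gronwall term, collect the semigroup-difference terms (controlled via the Trotter--Kato convergence of Remark following Assumption~\ref{A3}, uniform bounds of Lemma~\ref{lem3.2}, moment bounds and dominated convergence) into $\tau(\varepsilon)$, and conclude with Bellman--Gronwall. The only cosmetic difference is the stochastic term: the paper splits it into $J_1+J_2$, sending the Lipschitz part $J_1$ into the Gronwall integral and only $J_2$ into $\tau(\varepsilon)$, whereas you bound the whole $\varepsilon$-scaled integral at once via the linear growth of $b$ and a uniform-in-$\varepsilon$ a priori bound on $x_\varepsilon$ (which your first step indeed supplies), and both variants yield the stated estimate.
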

\begin{proof}
We study
\begin{align*}
x_\varepsilon(t)-\overline{x}(t)&=[S_\varepsilon(t)-S(t)][\varphi(0)+f(0,\varphi)]+[f(t,\pi_t\overline{x})-f(t,\pi_tx_\varepsilon)]\nonumber\\
&\quad+\int_0^tA_\varepsilon S_\varepsilon(t-s)[f(s,\pi_s\overline{x})-f(s,\pi_sx_\varepsilon)]ds\nonumber\\
&\quad+\int_0^t[AS(t-s)-A_\varepsilon S_\varepsilon(t-s)]f(s,\pi_s\overline{x})ds\nonumber\\
&\quad+\int_0^tS_\varepsilon(t-s)[a(s,\pi_sx_\varepsilon)-a(s,\pi_s\overline{x})]ds+\int_0^t[S_\varepsilon(t-s)-S(t-s)]a(s,\pi_s\overline{x})ds\nonumber\\
&\quad+\varepsilon\int_0^tS_\varepsilon(t-s)b(s,\pi_sx_\varepsilon)d\omega(s)+\sum_{k=1}^mS_\varepsilon(t-t_k)[I_k(x_\varepsilon(t_k))-I_k(\overline{x}(t_k))]\nonumber\\
&\quad+\sum_{k=1}^m[S_\varepsilon(t-t_k)-S(t-t_k)]I_k(\overline{x}(t_k)),\ P-a.s.
\end{align*}
for any $t\in[0,T].$ By Assumption (A1), we have
\begin{align}
\label{eq(3.19)}
&[1-C_4\|A^{-\alpha}\|-Me^{\delta T}\sum_{k=1}^mh_k]|x_\varepsilon(t)-\overline{x}(t)|\nonumber\\
&\leq\left|[S_\varepsilon(t)-S(t)][\varphi(0)+f(0,\varphi)]\right|+\left|\int_0^tA_\varepsilon S_\varepsilon(t-s)[f(s,\pi_s\overline{x})-f(s,\pi_sx_\varepsilon)]ds\right|\nonumber\\
&\quad+\left|\int_0^t[AS(t-s)-A_\varepsilon S_\varepsilon(t-s)]f(s,\pi_s\overline{x})ds\right|+\left|\int_0^tS_\varepsilon(t-s)[a(s,\pi_sx_\varepsilon)-a(s,\pi_s\overline{x})]ds\right|\nonumber\\
&\quad+\left|\int_0^t[S_\varepsilon(t-s)-S(t-s)]a(s,\pi_s\overline{x})ds\right|+\left|\varepsilon\int_0^tS_\varepsilon(t-s)b(s,\pi_sx_\varepsilon)d\omega(s)\right|\nonumber\\
&\quad+\left|\sum_{k=1}^m[S_\varepsilon(t-t_k)-S(t-t_k)]I_k(\overline{x}(t_k))\right|,\ P-a.s.
\end{align}
for any $t\in[0,T].$

We now estimate each term on the right hand side of equation (\ref{eq(3.19)}). Since $S_\varepsilon(t)x\rightarrow S(t)x$ as $\varepsilon\downarrow0,$ uniformly on compact subsets of $[0,\infty)\times X,$ then there exists an $\varepsilon_1>0$ and $k_1>0$ such that $E\left|[S_\varepsilon(t)-S(t)][\varphi(0)+f(0,\varphi)]\right|^p\leq k_1a_1(\varepsilon)$ for any $t\in[0,T],$ where $0<a_1(\varepsilon)\downarrow0$ as $\varepsilon_1>\varepsilon\downarrow0.$

Next, by Lemma \ref{lem3.2}, we get
\begin{align*}
&E\left|\int_0^tA_\varepsilon S_\varepsilon(t-s)[f(s,\pi_s\overline{x})-f(s,\pi_sx_\varepsilon)]ds\right|^p\\
&\leq T^{p-1}\frac{{M'}^pe^{p\delta T}}{t}E\int_0^t\left|f(s,\pi_s\overline{x})-f(s,\pi_sx_\varepsilon)\right|^pds\\
&\leq T^{p-1}\left(\frac{{M'}^pe^{p\delta T}}{t}C_4\|A^{-\alpha}\|\right)^p\int_0^tE|x_\varepsilon(s)-\overline{x}(s)|^pds
\end{align*}
and
\begin{align*}
E\left|\int_0^tS_\varepsilon(t-s)[a(s,\pi_sx_\varepsilon)-a(s,\pi_s\overline{x})]ds\right|^p&\leq T^{p-1}M^pe^{p\delta T}C_1\int_0^tE|x_\varepsilon(s)-\overline{x}(s)|^pds.
\end{align*}
Regarding the third term, note that
\begin{align*}
E\left|\int_0^t[AS(t-s)-A_\varepsilon S_\varepsilon(t-s)]f(s,\pi_s\overline{x})ds\right|^p&\leq2^{2p-1}\left(T\frac{M'e^{\delta T}}{t}C_5\|A^{-\alpha}\|\right)^p[1+C(T)]<\infty.
\end{align*}
Thus, there exists an $\varepsilon_2>0$ and $k_2>0$ such that
$$E\left|\int_0^t[AS(t-s)-A_\varepsilon S_\varepsilon(t-s)]f(s,\pi_s\overline{x})ds\right|^p\leq k_2a_2(\varepsilon),$$
uniformly for any $t\in[0,T],$ where $0<a_2(\varepsilon)\downarrow0$ as $\varepsilon_2>\varepsilon\downarrow0.$ Now we consider the fifth and seventh terms of (\ref{eq(3.19)}):
\begin{align*}
E\left|\int_0^t[S_\varepsilon(t-s)-S(t-s)]a(s,\pi_s\overline{x})ds\right|^p
&\leq(2TMe^{\delta T})^pC_3[1+C(T)]<\infty
\end{align*}
and
\begin{align*}
E\left|\sum_{k=1}^m[S_\varepsilon(t-t_k)-S(t-t_k)]I_k(\overline{x}(t_k))\right|^p&\leq2^{2p-1}m^{p-1}M^pe^{p\delta T}\sum_{k=1}^m{h_k}^{p}C(T)+m{h_0}^p<\infty.
\end{align*}
Hence, there exists an $\varepsilon_3>0,$ $k_3>0$ and an $\varepsilon_4>0,$ $k_4>0$ such that
$$E\left|\int_0^t[S_\varepsilon(t-s)-S(t-s)]a(s,\pi_s\overline{x})ds\right|^p\leq k_3a_3(\varepsilon)$$
and
$$E\left|\sum_{k=1}^m[S_\varepsilon(t-t_k)-S(t-t_k)]I_k(\overline{x}(t_k))\right|^p\leq k_4a_4(\varepsilon),$$
respectively, uniformly for any $t\in[0,T],$ where $0<a_3(\varepsilon)\downarrow0$ as $\varepsilon_3>\varepsilon\downarrow0$ and $0<a_4(\varepsilon)\downarrow0$ as $\varepsilon_4>\varepsilon\downarrow0.$
Finally, we study the stochastic integral term:
\begin{align*}
\left|\varepsilon\int_0^tS_\varepsilon(t-s)b(s,\pi_sx_\varepsilon)d\omega(s)\right|&\leq \left|\varepsilon\int_0^tS_\varepsilon(t-s)[b(s,\pi_sx_\varepsilon)-b(s,\pi_s\overline{x})]d\omega(s)\right|\\
&\quad+\left|\varepsilon\int_0^tS_\varepsilon(t-s)b(s,\pi_s\overline{x})d\omega(s)\right|\\
&\triangleq J_1+J_2.
\end{align*}
Using Proposition \ref{pro2.1} (or Proposition \ref{pro2.2} for $p=2$), we have
\begin{align*}
E{J_1}^p&\leq\varepsilon c(p,T)M^pe^{p\delta T}C_2\int_0^tE|x_\varepsilon(s)-\overline{x}(s)|^pds
\end{align*}
and
\begin{align*}
E{J_2}^p&\leq\varepsilon c(p,T)M^pe^{p\delta T}C_3T[1+C(T)]<\infty.
\end{align*}
Thus, there exists an an $\varepsilon_5>0$ and $k_5>0$ such that $E{J_2}^p\leq k_5a_5(\varepsilon),$ where $0<a_5(\varepsilon)\downarrow0$ as $\varepsilon_5>\varepsilon\downarrow0.$

Set $\tau(\varepsilon)=8^{p-1}[k_1a_1(\varepsilon)+k_2a_2(\varepsilon)+k_3a_3(\varepsilon)+k_4a_4(\varepsilon)+k_5a_5(\varepsilon)]$ for $0<\varepsilon<\varepsilon_0,$ where $\varepsilon_0<\min\{\varepsilon_i, i=1,2,\dots,5\}.$ Consequently, for $0<\varepsilon<\varepsilon_0,$ we have
\begin{align*}
E|x_\varepsilon(t)-\overline{x}(t)|^p&\leq\tau(\varepsilon)+8^{p-1}e^{p\delta T}\left[T^{p-1}\left(\frac{M'}{t}C_4\|A^{-\alpha}\|\right)^p+T^{p-1}M^pC_1+\varepsilon c(p,T)M^pC_2\right]\\
&\quad\times\int_0^tE|x_\varepsilon(s)-\overline{x}(s)|^pds.
\end{align*}
Invoking Bellman-Gronwall's lemma, one obtains
$$E|x_\varepsilon(t)-\overline{x}(t)|^p\leq\tau(\varepsilon)\phi(t),\ \forall t\in[0,T],$$
where $\phi(t)=\exp\left\{8^{p-1}e^{p\delta T}\left[T^{p-1}\left(\frac{M'}{t}C_4\|A^{-\alpha}\|\right)^p+T^{p-1}M^pC_1+\varepsilon c(p,T)M^pC_2\right]t\right\}.$
\end{proof}

\section{Application}\label{sec4}

In this section, as an application of the results in Section \ref{sec3}, we consider a classical limit theorem on the dependence of System (\ref{system(1.1)})-(\ref{system(1.3)}) on a parameter. Let us consider the family of impulsive neutral SPDEs:
\begin{align}
\label{system(4.1)}
 d[x_n(t)+f_n(t,\pi_tx_n)]&=[A_nx_n(t)+a_n(t,\pi_tx_n)]dt+b_n(t,\pi_tx_n)d\omega(t),\quad t>0,\ t\neq t_k,\\
\label{system(4.2}
 \Delta x_n(t_k)&=x_n(t_k^+)-x_n(t_k^-)=I_k^n(x_n(t_k)),\ t=t_k,\ k=1,2,\dots,m,\\
\label{system(4.3)}
x_n(t)&=\varphi(t),\quad t\in[-r,0]\ (0\leq r<\infty),
\end{align}
where $A_n$, $f_n,$ $a_n,$ $b_n$ and $I_k^n$ satisfy the conditions of Lemma \ref{lem(3.1)}, Lemma \ref{lem3.2} and Proposition  \ref{Pro2.3} with the same constants $C_i,$ $i=1,2,\dots,5,$ $h_k$ and $h_0$ for each $n=1,2,\dots.$ Then equation (\ref{system(4.1)}) has a unique mild solution $x_n\in C([0,T],L^p(\Omega,X)).$ Hence, $x_n(t)$ satisfies the stochastic integral equation
\begin{align*}
x_n(t)&=S_n(t)[\varphi(0)+f_n(0,\varphi)]-f_n(t,\pi_tx_n)-\int_0^tA_n S_n(t-s)f_n(s,\pi_sx_n)ds\nonumber\\
&\quad+\int_0^tS_n(t-s)a_n(s,\pi_sx_n)ds+\int_0^tS_n(t-s)b_n(s,\pi_sx_n)d\omega(s)\nonumber\\
&\quad+\sum_{0<t_k<t}S_n(t-t_k)I_k^n(x_n(t_k)),\quad a.s.,\ t\in [0,T],
\end{align*}
We now present the following further assumption to consider our main result of this section.
\begin{assumption}
\label{A4}
For each $Z>0,$
$\mathop{{\rm sup}}_{|\cdot|\leq Z}|f_n(t,\cdot)-f(t,\cdot)|\rightarrow0,$ $\mathop{{\rm sup}}_{|\cdot|\leq Z}|a_n(t,\cdot)-a(t,\cdot)|\rightarrow0,$ $\mathop{{\rm sup}}_{|\cdot|\leq Z}|b_n(t,\cdot)-b(t,\cdot)|\rightarrow0$ and $\mathop{{\rm sup}}_{|\cdot|\leq Z}|I_k^n(t,\cdot)-I_k(t,\cdot)|\rightarrow0$
as $n\rightarrow\infty$ for any $t\in[0,T].$
\end{assumption}
\begin{theorem}
\label{the4.1}
Suppose that Assumptions (\ref{A1}), (\ref{A2}) and (\ref{A4}) hold. Let $x_n(t)$ and $x(t)$ be the mild solutions of equations (\ref{system(4.1)}) and (\ref{system(1.1)}), respectively. Then for each $T\in(0,\infty),$
$$\sup_{0\leq t\leq T}E|x_n(t)-x(t)|^p\rightarrow0\quad as\ n\rightarrow\infty.$$
\end{theorem}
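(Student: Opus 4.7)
The plan is to adapt the proof of Theorem \ref{the(3.2)}, with an additional layer of bookkeeping that accounts for the fact that the coefficients $(A_n, f_n, a_n, b_n, I_k^n)$ differ from $(A, f, a, b, I_k)$ not just through the semigroup but through \emph{all} of the functions. As a preliminary step, since $(f_n, a_n, b_n, I_k^n)$ obey the same Lipschitz and growth constants as $(f, a, b, I_k)$ by hypothesis, the moment estimate proved in the first claim of Theorem \ref{the(3.2)} applies uniformly in $n$, giving both $\sup_{0 \le t \le T} E|x(t)|^p \le C(T)$ and $\sup_n \sup_{0 \le t \le T} E|x_n(t)|^p \le C(T)$ with a common constant $C(T)$. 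This \emph{a priori} bound supplies the integrable envelope for the dominated-convergence steps later on.

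Next I would write $x_n(t) - x(t)$ using the two mild formulations and decompose each of the six structural ingredients (initial term, $f$-term, $a$-drift, $b$-stochastic integral, and jump sum) into three parts by adding and subtracting intermediate expressions: a \emph{Lipschitz part} involving the coefficient evaluated at $\pi_s x_n$ versus $\pi_s x$; a \emph{coefficient-perturbation part} of the form $[\ell_n - \ell]$ evaluated at $\pi_s x$, where $\ell \in \{f, a, b, I_k\}$; and a \emph{semigroup-perturbation part} involving $[S_n - S]$ or $[A_n S_n - AS]$ acting on $\ell(s, \pi_s x)$. For instance, the drift splits as
\begin{align*}
&\int_0^t S_n(t-s) a_n(s, \pi_s x_n)\,ds - \int_0^t S(t-s) a(s, \pi_s x)\,ds \\
&\quad = \int_0^t S_n(t-s)[a_n(s, \pi_s x_n) - a_n(s, \pi_s x)]\,ds \\
&\qquad + \int_0^t S_n(t-s)[a_n(s, \pi_s x) - a(s, \pi_s x)]\,ds \\
&\qquad + \int_0^t [S_n(t-s) - S(t-s)] a(s, \pi_s x)\,ds,
\end{align*}
and analogously for the $f$-, $b$-, and $I_k$-terms. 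After taking $|\cdot|^p$ and expectations, the Lipschitz parts contribute $\int_0^t E|x_n(s) - x(s)|^p\,ds$ via Assumption \ref{A1} together with Proposition \ref{pro2.1} (or Proposition \ref{pro2.2} when $p=2$), Lemma \ref{lem3.2}, and Holder's inequality, exactly as in (\ref{eq(3.7)})--(\ref{eq(3.9)}); the semigroup-perturbation parts vanish as $n \to \infty$ by Proposition \ref{Pro2.3} combined with the growth bounds and the \emph{a priori} estimate $C(T)$, exactly as in the analysis of $\beta(n, T)$ in Theorem \ref{the(3.2)}.

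The step I expect to be the main obstacle is showing that the coefficient-perturbation parts tend to zero, because Assumption \ref{A4} only gives uniform convergence on bounded subsets of $X$, whereas the argument $\pi_s x$ is a random element that is not uniformly bounded in $\omega$. The cleanest remedy is a dominated-convergence argument in $(\omega, s)$: for almost every $\omega$ the trajectory $s \mapsto \pi_s x(\omega)$ is continuous on $[0,T]$ and hence bounded by some random $Z(\omega) < \infty$, so Assumption \ref{A4} yields the pointwise limit $|a_n(s, \pi_s x) - a(s, \pi_s x)|(\omega) \to 0$, while Assumption \ref{A1} provides the integrable envelope $2^p C_3(1 + |x(s)|^p)$ whose integral over $[0,T] \times \Omega$ is controlled by $T(1 + C(T))$; the same device applies to the $f_n$-, $b_n$-, and $I_k^n$-perturbations (using the $A^{\alpha}$-growth bound and the Proposition \ref{pro2.1}/\ref{pro2.2} estimates for the stochastic piece). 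Assembling all contributions yields
\begin{align*}
\bigl[1 - C_4 \|A^{-\alpha}\| - M e^{\delta T} {\textstyle\sum_{k=1}^m} h_k\bigr]^p \sup_{0 \le t \le T} E|x_n(t) - x(t)|^p \le \gamma(n, T) + K(T) \int_0^t E|x_n(s) - x(s)|^p\,ds,
\end{align*}
with $\gamma(n, T) \to 0$ as $n \to \infty$ and the prefactor positive by Lemma \ref{lem(3.1)}; the Bellman-Gronwall lemma then closes the argument, mirroring the final step of Theorem \ref{the(3.2)}.
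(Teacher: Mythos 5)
Your proposal is correct and follows essentially the same route as the paper: the same three-way splitting of each term into a Lipschitz part, a coefficient-perturbation part $[\ell_n-\ell](\cdot,\pi_s x)$, and a semigroup-perturbation part $[S_n-S]$ or $[A_nS_n-AS]$ applied to $\ell(\cdot,\pi_s x)$, with the Lipschitz parts absorbed by a Gronwall-type argument and the perturbation parts sent to zero via Proposition \ref{Pro2.3}, Assumption \ref{A4}, the growth bounds, the a priori estimate $C(T)$, and dominated convergence. Your explicit justification of the dominated-convergence step for the coefficient perturbations (pathwise boundedness of $\pi_s x$ plus the growth envelope) is in fact more detailed than the paper's, which simply cites Assumption \ref{A4} and Lebesgue's theorem.
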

\begin{proof}
We consider
\begin{align*}
x_n(t)-x(t)&=\psi(t)+[f_n(t,\pi_tx)-f_n(t,\pi_tx_n)]+\int_0^tA_nS_n(t-s)[f_n(s,\pi_sx)-f_n(s,\pi_sx_n)]ds\\
&\quad+\int_0^tS_n(t-s)[a_n(s,\pi_sx_n)-a_n(s,\pi_sx)]ds\\
&\quad+\int_0^tS_n(t-s)[b_n(s,\pi_sx_n)-b_n(s,\pi_sx)]d\omega(s)\\
&\quad+\sum_{k=1}^m\left\{S_n(t-t_k)[I_k^n(x_n(t_k))-I_k^n(x(t_k))]\right\},\ P-a.s.
\end{align*}
for any $t\in[0,T],$ where
\begin{align}
\label{eq4.4}
\psi(t)&=[S_n(t)-S(t)]\varphi(0)+S_n(t)[f_n(0,\varphi)-f(0,\varphi)]+[S_n(t)-S(t)]f(0,\varphi)\nonumber\\
&\quad+[f(t,\pi_tx)-f_n(t,\pi_tx)]+\int_0^tA_nS_n(t-s)[f(s,\pi_sx)-f_n(s,\pi_sx)]ds\nonumber\\
&\quad+\int_0^t[AS(t-s)-A_nS_n(t-s)]f(s,\pi_sx)ds+\int_0^tS_n(t-s)[a_n(s,\pi_sx)-a(s,\pi_sx)]ds\nonumber\\
&\quad+\int_0^t[S_n(t-s)-S(t-s)]a(s,\pi_sx)ds+\int_0^tS_n(t-s)[b_n(s,\pi_sx)-b(s,\pi_sx)]d\omega(s)\nonumber\\
&\quad+\int_0^t[S_n(t-s)-S(t-s)]b(s,\pi_sx)d\omega(s)+\sum_{k=1}^m\left\{S_n(t-t_k)[I_k^n(x(t_k))-I_k(x(t_k))]\right\}\nonumber\\
&\quad+\sum_{k=1}^m\left\{[S_n(t-t_k)-S(t-t_k)]I_k(x(t_k))\right\}.
\end{align}
By Proposition  \ref{pro2.1} (or Proposition  \ref{pro2.2} when $p=2$) and Lemma \ref{lem3.2}, we get
\begin{align*}
&\left(1-C_4\|A^{-\alpha}\|-Me^{\delta T}\sum_{k=1}^mh_k\right)^pE|x_n(t)-x(t)|^p\\
&\leq4^{p-1}\bigg\{E|\psi(t)|^p+T^{p-1}\left(\frac{M'e^{\delta T}}{t}C_4\|A^{-\alpha}\|\right)^p\int_0^tE|x_n(s)-x(s)|^pds\\
&\quad+T^{p-1}M^pe^{p\delta T}C_1\int_0^tE|x_n(s)-x(s)|^pds+c(p,T)M^pe^{p\delta T}C_2\int_0^tE|x_n(s)-x(s)|^pds\bigg\}\\
&\leq4^{p-1}E|\psi(t)|^p+L\int_0^tE|x_n(s)-x(s)|^pds,
\end{align*}
where $L=4^{p-1}e^{p\delta T}\left\{T^{p-1}\left[\frac{M'}{t}C_4\|A^{-\alpha}\|\right]^p+T^{p-1}M^pC_1+c(p,T)M^pC_2\right\}.$
Hence, by Lemma 1 from Gikhman and Skorohod \cite[p.41]{72}, we have
$$E|x_n(t)-x(t)|^p\leq4^{p-1}E|\psi(t)|^p+L\int_0^te^{L(t-s)}E|\psi(t)|^pds.$$
Therefore, to prove the theorem, it is sufficient to show that $\sup_{0\leq t\leq T}E|\psi(t)|^p\rightarrow0$ as $n\rightarrow\infty.$ First, $\sup_{0\leq t\leq T}E|[S_n(t)-S(t)]\varphi(0)|^p\rightarrow0$ and $\sup_{0\leq t\leq T}E|[S_n(t)-S(t)]f(0,\varphi)|^p\rightarrow0$ as $n\rightarrow\infty$ as shown earlier in (\ref{eq(3.5)}) and (\ref{eq(3.6)}), respectively.  To show that the remaining terms in (\ref{eq4.4}) also go to zero, we consider
\begin{align}
\label{eq4.5}
E|S_n(t)[f_n(0,\varphi)-f(0,\varphi)]|^p\leq2^{2p-1}(Me^{\delta T}C_5\|A^{-\alpha}\|)^p(1+E\|\varphi\|_c^p)<\infty,
\end{align}
\begin{align}
\label{eq4.6}
E|f_n(t,\pi_tx)-f(t,\pi_tx)]|^p\leq2^{2p-1}(C_5\|A^{-\alpha}\|)^p[1+C(T)]<\infty,
\end{align}
\begin{align}
\label{eq4.7}
E\left|\int_0^tS_n(t-s)[a_n(s,\pi_sx)-a(s,\pi_sx)]ds\right|^p&\leq(2TMe^{\delta T})^pC_3[1+C(T)]<\infty,
\end{align}
\begin{align}
\label{eq4.8}
E\left|\int_0^tS_n(t-s)[b_n(s,\pi_sx)-b(s,\pi_sx)]ds\right|^p&\leq c(p,T)(2Me^{\delta T})^pTC_3[1+C(T)]<\infty,
\end{align}
\begin{align}
\label{eq4.9}
E\left|\int_0^tA_nS_n(t-s)[f(s,\pi_sx)-f_n(s,\pi_sx)]ds\right|^p&\leq2^{2p-1}\left(T\frac{M'e^{\delta T}}{t}C_5\|A^{-\alpha}\|\right)^p[1+C(T)]<\infty
\end{align}
and
\begin{align}
\label{eq4.10}
E\left|\sum_{k=1}^mS_n(t-t_k)[I_k^n(x(t_k))-I_k(x(t_k))]\right|^p&\leq2^{2p-1}m^{p-1}M^pe^{p\delta T}\left(\sum_{k=1}^m{h_k}^p+m{h_0}^p\right)<\infty.
\end{align}
Thus, by Assumption (\ref{A4}) and Lebesgue's dominated convergence theorem, the left hand side of inequalities (\ref{eq4.5})-(\ref{eq4.10}) tend to zero as $n\rightarrow\infty$ for any $t\in[0,T].$
From Assumption (\ref{A2}), Lemma \ref{lem3.2}, Proposition \ref{Pro2.3} and dominated convergence theorem, it follows that
$$\mathop{{\rm sup}}_{0\leq t\leq T}\left|\int_0^t[AS(t-s)-A_nS_n(t-s)]f(s,\pi_sx)ds\right|^p\rightarrow0\quad as\ n\rightarrow\infty,$$
$$\mathop{{\rm sup}}_{0\leq t\leq T}\left|\int_0^t[S_n(t-s)-S(t-s)]a(s,\pi_sx)ds\right|^p\rightarrow0\quad as\ n\rightarrow\infty,$$
$$\mathop{{\rm sup}}_{0\leq t\leq T}\left|\int_0^t[S_n(t-s)-S(t-s)]b(s,\pi_sx)d\omega(s)\right|^p\rightarrow0\quad as\ n\rightarrow\infty$$
and
$$\mathop{{\rm sup}}_{0\leq t\leq T}\left|\sum_{k=1}^m\left\{[S_n(t-t_k)-S(t-t_k)]I_k(x(t_k))\right\}\right|^p\rightarrow0\quad as\ n\rightarrow\infty.$$
This completes the proof.
\end{proof}
Suppose that the coefficients in System (\ref{system(4.1)})-(\ref{system(4.3)}) depend on a parameter $\theta$ which varies through some set of numbers $G_1$:
\begin{align}
\label{system(4.11)}
 d[x_\theta(t)+f_\theta(t,\pi_tx_\theta)]&=[A_\theta x_\theta(t)+a_\theta(t,\pi_tx_\theta)]dt+b_\theta(t,\pi_tx_\theta)d\omega(t),\quad t>0,\ t\neq t_k,\\
\label{system(4.12}
 \Delta x_\theta(t_k)&=x_\theta(t_k^+)-x_\theta(t_k^-)=I_k^\theta(x_\theta(t_k)),\ t=t_k,\ k=1,2,\cdots,m,\\
\label{system(4.13)}
x_\theta(t)&=\varphi(t),\quad t\in[-r,0]\ (0\leq r<\infty),
\end{align}
where $A_\theta$ is the infinitesimal generator of a strongly continuous semigroup $\{S_\theta(t),t\geq0\}$ of bounded linear operators on $X.$ Assume that for each $N>0,$ $\mathop{{\rm sup}}_{|\cdot|\leq N}|f_\theta(t,\cdot)-f_{\theta_0}(t,\cdot)|\rightarrow0,$ $\mathop{{\rm sup}}_{|\cdot|\leq N}|a_\theta(t,\cdot)-a_{\theta_0}(t,\cdot)|\rightarrow0,$ $\mathop{{\rm sup}}_{|\cdot|\leq N}|b_\theta(t,\cdot)-b_{\theta_0}(t,\cdot)|\rightarrow0$ and $\mathop{{\rm sup}}_{|\cdot|\leq N}|I_k^\theta(t,\cdot)-I_k^{\theta_0}(t,\cdot)|\rightarrow0$ as $\theta\rightarrow\theta_0$ for any $t\in[0,T].$
\begin{corollary}
\label{cor4.2}
Let $A_\theta,$ $f_\theta,$ $a_\theta,$ $b_\theta$ and $I_k^\theta$ for each $\theta$ satisfy the conditions of Lemma \ref{lem(3.1)}, Lemma \ref{lem3.2} and Proposition \ref{Pro2.3} with the same constants. Then System (\ref{system(4.11)})-(\ref{system(4.13)}) has a unique mild solution and satisfies for each $T\in(0,\infty):$
$$\mathop{{\rm sup}}_{0\leq t\leq T}E|x_\theta(t)-x_{\theta_0}(t)|^p\rightarrow0\quad as\ \theta\rightarrow\theta_0.$$
\end{corollary}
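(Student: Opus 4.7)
The plan is to deduce the corollary from Theorem \ref{the4.1} via a sequential characterization of the limit $\theta\to\theta_0$. First, because for each fixed $\theta$ the data $(A_\theta,f_\theta,a_\theta,b_\theta,I_k^\theta)$ satisfy the hypotheses of Lemma \ref{lem(3.1)} with the same constants $C_i,h_k,h_0$, the smallness condition $L\|A^{-\alpha}\|+Me^{\delta T}\sum_{k=1}^m h_k<1$ is verified uniformly in $\theta$, so Lemma \ref{lem(3.1)} immediately yields a unique mild solution $x_\theta\in C([0,T],L^p(\Omega,X))$ of System (\ref{system(4.11)})--(\ref{system(4.13)}) satisfying the integral representation analogous to the one written after equation (\ref{system(4.3)}).

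For the convergence statement, I would argue by contradiction/sequences. Suppose $\sup_{0\le t\le T}E|x_\theta(t)-x_{\theta_0}(t)|^p$ does not tend to $0$ as $\theta\to\theta_0$; then there exists $\varepsilon_0>0$ and a sequence $\theta_n\to\theta_0$ with $\sup_{0\le t\le T}E|x_{\theta_n}(t)-x_{\theta_0}(t)|^p\ge\varepsilon_0$ for all $n$. I would then set $A_n:=A_{\theta_n}$, $f_n:=f_{\theta_n}$, $a_n:=a_{\theta_n}$, $b_n:=b_{\theta_n}$, $I_k^n:=I_k^{\theta_n}$ and take the ``limit'' equation to be the one indexed by $\theta_0$ (i.e., identify $A=A_{\theta_0}$ etc. in the statement of Theorem \ref{the4.1}). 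The uniform-on-bounded-sets convergence hypotheses stated just above the corollary translate directly into Assumption (\ref{A4}) for these sequences; Assumption (\ref{A1}) for the limit equation holds because the constants are the same and (\ref{A1}) is preserved under the stated convergence; and the hypothesis that each $A_\theta$ satisfies Lemma \ref{lem3.2} and Proposition \ref{Pro2.3} with the same constants provides Assumption (\ref{A2}) along the sequence. Theorem \ref{the4.1} then gives $\sup_{0\le t\le T}E|x_{\theta_n}(t)-x_{\theta_0}(t)|^p\to 0$, contradicting the choice of the sequence.

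The only non-routine point, and the place that deserves care, is the translation of the $\theta$-continuity hypotheses into the sequential hypotheses required by Theorem \ref{the4.1}; in particular, verifying Assumption (\ref{A2})(ii), i.e. that $R(\lambda,A_{\theta_n})x\to R(\lambda,A_{\theta_0})x$ for $\mathrm{Re}\,\lambda>\delta$ and $x\in X$, along every sequence $\theta_n\to\theta_0$. This is implicit in the phrase ``satisfy the conditions of $\cdots$ Proposition \ref{Pro2.3} with the same constants'', which in context is meant to include the strong resolvent continuity with respect to $\theta$ at $\theta_0$; granting this, the sectoriality constants $M,\delta$ are uniform by hypothesis and so Assumption (\ref{A2})(i) is automatic. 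Once these two observations are in place, the rest of the argument is a direct invocation of Theorem \ref{the4.1}, and no additional estimates are needed beyond those already established there.
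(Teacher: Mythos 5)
Your proposal is correct, and in substance it rests on exactly what the paper's own (one-line) proof rests on: the argument of Theorem \ref{the4.1}. The difference is procedural. The paper simply asserts that the proof is ``similar'' to Theorem \ref{the4.1}, i.e.\ one reruns the same decomposition and estimates with the index $n$ replaced by the parameter $\theta\to\theta_0$; you instead invoke Theorem \ref{the4.1} as a black box, passing to sequences $\theta_n\to\theta_0$ (by contradiction) and identifying $A_n=A_{\theta_n}$, $A=A_{\theta_0}$, etc. This reduction is legitimate here because $\theta$ ranges over a set of numbers $G_1$, so the limit $\theta\to\theta_0$ is sequentially characterizable, and it buys economy: no estimate needs to be repeated, and the hypotheses of the corollary (same constants, the convergence assumptions displayed before the statement, and the resolvent/semigroup continuity implicit in ``satisfy the conditions of Proposition \ref{Pro2.3}'') translate verbatim into Assumptions (\ref{A1}), (\ref{A2}) and (\ref{A4}) along each such sequence, exactly as you note. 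Two small remarks: the contradiction framing is unnecessary (applying Theorem \ref{the4.1} to every sequence $\theta_n\to\theta_0$ already gives the claim), and Assumption (\ref{A1}) for the limit equation is not something ``preserved under the stated convergence'' --- it holds simply because $\theta_0$ is itself a parameter value to which the hypothesis of the corollary applies. Neither affects correctness.
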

\begin{proof}
The proof of Corollary \ref{cor4.2} is similar as the case of $\mathop{{\rm sup}}_{0\leq t\leq T}E|x_n(t)-x(t)|^p\rightarrow0\quad as\ n\rightarrow\infty$ in Theorem \ref{the4.1}.
\end{proof}

\acknowledgements{\rm We are extremely grateful to the critical comments and invaluable suggestions made by anonymous honorable reviewers. }

\end{document}